\documentclass[11pt]{article}
\usepackage[margin=1in]{geometry}
\usepackage[T1]{fontenc}
\usepackage[utf8]{inputenc}
\usepackage{lmodern}
\usepackage{microtype}
\usepackage{amsmath,amssymb,amsthm,mathtools}
\usepackage{enumitem}
\usepackage{graphicx}
\usepackage{booktabs,longtable,array}
\usepackage{float}
\usepackage{hyperref}
\usepackage{mathrsfs}
\usepackage{setspace}
\usepackage{xcolor}
\hypersetup{colorlinks=true,linkcolor=blue,citecolor=blue,urlcolor=blue}
\allowdisplaybreaks

\newtheorem{theorem}{Theorem}[section]
\newtheorem{proposition}[theorem]{Proposition}
\newtheorem{lemma}[theorem]{Lemma}
\newtheorem{corollary}[theorem]{Corollary}
\theoremstyle{definition}
\newtheorem{definition}[theorem]{Definition}
\newtheorem{problem}[theorem]{Problem}
\theoremstyle{remark}

\DeclareMathOperator{\supp}{supp}

\DeclareMathOperator{\Spec}{Spec}

\title{The Degree Landscape of the Partition Graph: Maximal Degree, Extremal Vertices, and Spectra}
\author{Fedor B. Lyudogovskiy}
\date{}

\begin{document}
\maketitle

\begin{abstract}
We study the degree landscape of the partition graph $G_n$, whose vertices are the integer partitions of $n$ and whose edges correspond to elementary transfers of one unit between parts, followed by reordering. Using the previously established local degree formula, we introduce the degree layers $D_d(n)$, the degree spectrum $\Spec_D(n)$, and the numerical invariants $\Delta_n$, $m_\Delta(n)$, and $s(n)$.

The main theorem provides an exact formula for the maximal degree. If
\[
\rho(n):=\max\{r:T_r\le n\},\qquad T_r=\frac{r(r+1)}{2},
\]
and
\[
\nu:=n-T_{\rho(n)},
\]
then
\[
\Delta_n=\rho(n)\bigl(\rho(n)-1\bigr)+\beta_{\rho(n)}(\nu),
\]
where $\beta_r$ is an explicit budget function governed by a square--pronic threshold rule. We also prove that every maximal-degree vertex lies on the maximal-support stratum, and we obtain exact extremal classifications at the levels $n=T_t$, $n=T_t+1$, and $n=T_t+2$.

The paper also includes a finite computation on the range $1\le n\le 60$, recording extremal multiplicities, representative extremal shapes, spectrum sizes, selected degree histograms, and first data on contact between the extremal layer and the self-conjugate axis. This computational part is deliberately limited in scope. It is descriptive rather than exhaustive, and is included only as a first numerical profile of the degree landscape.
\end{abstract}

\noindent\textbf{Keywords.} partition graph; integer partitions; vertex degree; extremal partitions; degree spectrum; self-conjugate axis.

\noindent\textbf{MSC 2020.} 05A17, 05C75, 05C69, 05C12.

\section{Introduction}
\label{sec:introduction}

For each $n$, let $G_n$ denote the graph whose vertices are the integer partitions of $n$, with two partitions adjacent whenever one is obtained from the other by an elementary transfer of one unit between parts, followed by reordering. This graph has already been studied from several complementary points of view in the preceding papers of the series: the clique complex $K_n=Cl(G_n)$ was shown to have a particularly simple global homotopy type \cite{LyudHomotopy}, the local structure through a fixed vertex was described in terms of ordered support data and explicit local invariants \cite{LyudLocal}, the graph was then viewed as a growing discrete geometric object with a boundary framework, central region, and anisotropic large-scale organization \cite{LyudGrowing}, and more recently its axial and simplex-layer morphologies were formalized through the self-conjugate axis, the spine, and the simplex stratification \cite{LyudAxial,LyudSimplex}.

This paper focuses on one specific aspect: the global landscape of vertex degrees. The degree of a vertex is one of the most basic local invariants of a graph, but in the partition graph it is governed by a surprisingly rigid combinatorial rule. If
\[
\lambda=(L_1^{m_1},L_2^{m_2},\dots,L_r^{m_r}),\qquad L_1>\cdots>L_r>0,
\]
and if $g_i=L_i-L_{i+1}$ with $L_{r+1}=0$, then the local degree formula gives
\[
\deg(\lambda)=r(r-1)+\sum_{i=1}^r \mathbf 1_{m_i>1}+\sum_{i=1}^r \mathbf 1_{g_i>1}.
\]
Thus the degree is controlled by three discrete ingredients: the support size, the multiplicity pattern, and the gap pattern. In \cite{LyudLocal}, this formula serves as part of a local, neighborhood-level description; here we employ it globally as the foundation for studying the degree landscape of $G_n$.

One aim of the paper is to formalize the degree-theoretic stratification
\[
V(G_n)=\bigsqcup_d D_d(n),\qquad D_d(n):=\{\lambda\vdash n:\deg(\lambda)=d\},
\]
and to study the associated degree spectrum
\[
\Spec_D(n):=\{\deg(\lambda):\lambda\vdash n\}.
\]
This leads naturally to the basic extremal parameters
\[
\Delta_n:=\max_{\lambda\vdash n}\deg(\lambda),\qquad m_\Delta(n):=|\{\lambda:\deg(\lambda)=\Delta_n\}|,\qquad s(n):=|\Spec_D(n)|.
\]
While the degree formula itself is local, these quantities are global: they describe the outer profile, multiplicity, and numerical width of the degree landscape across the entire graph.

A second aim is structural: we show that the local degree formula admits a useful global reinterpretation through a triangular decomposition of the total mass of a partition. This converts the degree problem into a constrained budget problem over support-maximal staircase backgrounds. As a consequence, extremal degree turns out to be governed by a rigid support principle: if $T_t=t(t+1)/2\le n<T_{t+1}$, then every vertex of maximal degree has exactly $t$ distinct part sizes. In other words, maximal degree is forced onto the maximal-support stratum.

Our main result is an exact formula for the maximal degree. Write
\[
\rho(n):=\max\{r:T_r\le n\},\qquad \nu:=n-T_{\rho(n)}.
\]
Then
\[
0\le \nu\le \rho(n),
\]
and we prove that
\[
\Delta_n=\rho(n)\bigl(\rho(n)-1\bigr)+\beta_{\rho(n)}(\nu),
\]
where $\beta_r(\nu)$ is the explicit bonus-budget function determined by the multiset
\[
\{1,1,2,2,\dots,r,r\}.
\]
Equivalently, if $n=T_t+\nu$ with $0\le \nu\le t$, then
\[
\Delta_n=t(t-1)+\beta_t(\nu).
\]
The function $\beta_t(\nu)$ is governed by a square--pronic threshold rule, so the exact maximal-degree sequence is piecewise controlled on each triangular interval $T_t\le n<T_{t+1}$.

This exact formula has several immediate consequences. It recovers the staircase partition as the unique maximizer at the triangular values $n=T_t$, gives the first nontrivial overlevels $T_t+1$ and $T_t+2$ explicitly, and shows that the maximal-degree sequence is governed piecewise on each triangular interval $T_t\le n<T_{t+1}$, rather than by a single uniform rule in $n$. In particular, the extremal degree is not merely bounded or approximated: it is determined exactly on every triangular interval.

A third aim is computational. Once $\Delta_n$ is known exactly, the next natural questions concern the geometry of the top degree layer rather than its height. We therefore study the multiplicity and shape of the extremal set
\[
\operatorname{MaxDeg}(n):=\{\lambda\vdash n:\deg(\lambda)=\Delta_n\},
\]
the number $s(n)$ of distinct degree values, selected degree histograms
\[
H_n(d):=|D_d(n)|,
\]
and a first comparison between the extremal layer and the self-conjugate axis $Ax_n$. Throughout the paper we keep the distinction between theorem-level statements and numerical observations explicit.

The main contributions of the paper may be summarized as follows.
\begin{enumerate}[label=\arabic*.]
\item We introduce the degree layers $D_d(n)$, the degree spectrum $\Spec_D(n)$, and the basic numerical invariants $\Delta_n$, $m_\Delta(n)$, and $s(n)$.
\item We reinterpret the local degree formula globally through a triangular mass decomposition and a weighted bonus-budget constraint.
\item We prove that every maximal-degree vertex lies on the maximal-support stratum.
\item We determine the maximal degree $\Delta_n$ exactly for every $n$, via the square--pronic budget rule on each triangular interval.
\item We obtain the first exact extremal classifications at $n=T_t$, $T_t+1$, and $T_t+2$.
\item We provide a finite computational profile on the range $1\le n\le 60$, covering extremal multiplicity, representative extremal shapes, degree spectra, selected histograms, and first axial contact data.
\end{enumerate}

The paper is organized as follows. Section~\ref{sec:preliminaries} fixes notation and recalls the imported degree formula. Section~\ref{sec:max-degree} develops the triangular mass decomposition, proves the maximal-support principle, derives the exact formula for $\Delta_n$, and records its first consequences. Section~\ref{sec:computational} gives a compact computational profile of the degree landscape on the range $1\le n\le 60$, including extremal multiplicities, representative extremal shapes, degree spectra, selected histograms, and a first axial comparison. Section~\ref{sec:conclusion} collects conclusions and open problems.

\section{Preliminaries and degree language}
\label{sec:preliminaries}

We write $G_n$ for the partition graph of $n$: its vertices are the integer partitions of $n$, and two vertices are adjacent if one is obtained from the other by an elementary transfer of one unit between two parts, followed by reordering.

We also write
\[
K_n:=Cl(G_n)
\]
for the clique complex of $G_n$.

Throughout the paper, for a partition $\lambda\vdash n$ we use the block form
\[
\lambda=(L_1^{m_1},L_2^{m_2},\dots,L_r^{m_r}),\qquad L_1>L_2>\cdots>L_r>0,\qquad m_i\ge 1,
\]
where $L_1,\dots,L_r$ are the distinct part sizes of $\lambda$, and $m_i$ is the multiplicity of $L_i$. To avoid conflict with the triangular parameter $t$ used later for the interval
\[
T_t\le n<T_{t+1},
\]
we use the symbols $L_i$ and $r$ rather than the symbols $s_i$ and $t$ used in the local paper.

We call
\[
r=r(\lambda):=|\supp(\lambda)|
\]
the support size of $\lambda$. We also set
\[
L_{r+1}:=0,\qquad g_i:=L_i-L_{i+1}\quad (1\le i\le r),
\]
and call the numbers $g_i$ the support gaps of $\lambda$.

Thus the local degree data of $\lambda$ consist of three ordered pieces: the support size $r$, the multiplicity pattern $(m_1,\dots,m_r)$, and the gap pattern $(g_1,\dots,g_r)$.

\subsection{Degree layers and degree spectrum}

\begin{definition}
For $d\ge 0$, the degree layer of level $d$ is
\[
D_d(n):=\{\lambda\vdash n:\deg_{G_n}(\lambda)=d\}.
\]
\end{definition}

\begin{definition}
The degree spectrum of $G_n$ is
\[
\Spec_D(n):=\{\deg_{G_n}(\lambda):\lambda\vdash n\}.
\]
\end{definition}

\begin{definition}
The maximal degree of $G_n$ is
\[
\Delta_n:=\max_{\lambda\vdash n}\deg_{G_n}(\lambda).
\]
\end{definition}

\begin{definition}
The extremal degree set is
\[
\operatorname{MaxDeg}(n):=\{\lambda\vdash n:\deg_{G_n}(\lambda)=\Delta_n\},
\]
and its cardinality is denoted by
\[
m_\Delta(n):=|\operatorname{MaxDeg}(n)|.
\]
\end{definition}

\begin{definition}
The number of distinct degree values in $G_n$ is
\[
s(n):=|\Spec_D(n)|.
\]
\end{definition}

We write
\[
Ax_n:=\{\lambda\vdash n: \lambda'=\lambda\}
\]
for the self-conjugate axis of $G_n$. When needed in open questions, we also use the previously established notation $Sp_n$ for the spine of the partition graph.

\subsection{Imported degree formula}

The key input is the explicit degree formula from the local theory.

\begin{proposition}[Degree formula]\label{prop:degree-formula}
Let
\[
\lambda=(L_1^{m_1},L_2^{m_2},\dots,L_r^{m_r})\vdash n,\qquad L_1>\cdots>L_r>0,
\]
and let
\[
g_i=L_i-L_{i+1}\quad (1\le i\le r),\qquad L_{r+1}=0.
\]
Then
\[
\deg(\lambda)=r(r+1)-\sum_{i=1}^r \mathbf 1_{m_i=1}-\sum_{i=1}^r \mathbf 1_{g_i=1}.
\]
Equivalently,
\[
\deg(\lambda)=r(r-1)+\sum_{i=1}^r \mathbf 1_{m_i>1}+\sum_{i=1}^r \mathbf 1_{g_i>1}.
\]
\end{proposition}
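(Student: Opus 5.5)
The plan is a direct count of neighbours, organised by the \emph{values} involved in a move rather than by the individual parts. The first step is to fix the convention for an elementary transfer. A unit is taken from a part of size $a\ge 1$ and given either to another existing part of size $b\ge 1$ or to an empty slot, which I treat as a part of size $b=0$; a part of size $a=1$ may vanish in the process. The result is the multiset obtained from $\lambda$ by removing $\{a,b\}$ and adding $\{a-1,b+1\}$ (with zeros discarded). This result depends only on the ordered pair of values $(a,b)$, not on which copies were chosen. Allowing $b=0$ is essential: with transfers only between existing parts, the count comes out as $r(r-1)+\sum_i\mathbf 1_{m_i>1}-\#\{i<r:g_i=1\}$, which does not match the stated formula. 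So I would make this convention explicit, consistent with \cite{LyudLocal}, before counting.

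The second step is to show that $\deg(\lambda)$ equals the number of admissible ordered pairs $(a,b)$ whose move is nontrivial. A pair is admissible when $a\in\{L_1,\dots,L_r\}$, $b\in\{L_1,\dots,L_r,0\}$, and, if $a=b$, then $m_a\ge 2$. Two facts are needed.
\begin{itemize}
\item \emph{Trivial moves.} A move returns $\lambda$ itself exactly when $\{a-1,b+1\}=\{a,b\}$ as multisets, i.e.\ exactly when $b=a-1$.
\item \emph{Injectivity.} Nontrivial moves give distinct partitions. The signed multiset difference $\mu-\lambda=\{a-1,b+1\}-\{a,b\}$ determines $(a,b)$: cancellation can only occur when $a-1=b+1$, and then the difference is $2\cdot\{b+1\}-\{b\}-\{b+2\}$, from which $a=b+2$ and $b$ are still read off. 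Also $a>b+1$ and $a\le b$ produce visibly different difference patterns.
\end{itemize}
I expect this injectivity check to be the main (though elementary) obstacle. The case analysis has to be done carefully, including the cases where $b=0$ or $a-1=0$ and zeros are dropped.

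The third step is the count itself:
\begin{itemize}
\item ordered pairs of distinct positive values contribute $r(r-1)$;
\item pairs $a=b=L_i$ contribute $\sum_i\mathbf 1_{m_i>1}$;
\item pairs $(L_i,0)$ contribute $r$.
\end{itemize}
From these we subtract the trivial pairs, which are those with $b=a-1$. With $b>0$ these are the pairs $(L_i,L_{i+1})$ with $g_i=1$ for $i<r$. With $b=0$ there is one more, the pair $(L_r,0)$ when $L_r=1$, i.e.\ when $g_r=1$. Altogether the subtracted amount is $\sum_{i=1}^r\mathbf 1_{g_i=1}$. This gives
\[
\deg(\lambda)=r(r-1)+r+\sum_i\mathbf 1_{m_i>1}-\sum_i\mathbf 1_{g_i=1}.
\]
Rewriting $r=\sum_i\bigl(\mathbf 1_{g_i>1}+\mathbf 1_{g_i=1}\bigr)$ yields the second form of the formula. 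Rewriting instead $r(r-1)+\sum_i\mathbf 1_{m_i>1}=r(r-1)+r-\sum_i\mathbf 1_{m_i=1}$ yields the first form, $r(r+1)-\sum_i\mathbf 1_{m_i=1}-\sum_i\mathbf 1_{g_i=1}$.
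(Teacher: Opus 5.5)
Your proposal is correct, and it does more than the paper does at this point. The paper gives no derivation of the formula: it imports it from the earlier local paper of the series and only translates the notation.

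Your direct count over ordered value pairs $(a,b)$, with $a\in\{L_1,\dots,L_r\}$ and $b\in\{L_1,\dots,L_r,0\}$, adds two things the citation does not.

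\begin{itemize}
\item It makes the adjacency convention explicit. The paper's informal definition (``transfer of one unit between parts'') leaves open whether a unit may go to an empty slot. Your side remark correctly shows that the stated formula fails unless this is allowed. The paper relies on the convention silently, for instance in $\deg((n))=1$ in Proposition~\ref{prop:min-degree}.
\item It explains the bottom term $\mathbf 1_{g_r=1}$: it records exactly the triviality of the move $(L_r,0)$ when $L_r=1$.
\end{itemize}

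Two small points would tighten the write-up.

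First, adjacency is defined by ``one is obtained from the other'', so you should note that the relation is symmetric. The move $(a,b)$ is undone by the move $(b+1,a-1)$, with $a-1=0$ read as the empty slot. Hence the neighbours of $\lambda$ are exactly the nontrivial results of moves from $\lambda$.

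Second, the injectivity case analysis you flag as the main obstacle can be avoided entirely. Write $\langle x\rangle$ for the generators of the free abelian group on the positive integers, set $\langle 0\rangle:=0$, and put $\epsilon_x:=\langle x\rangle-\langle x+1\rangle$ for $x\ge 0$. Then $\mu-\lambda=\epsilon_{a-1}-\epsilon_b$. The $\epsilon_x$ are linearly independent, since each has $\langle x+1\rangle$ as its largest generator. Consequently $\mu=\lambda$ if and only if $b=a-1$, and otherwise $\mu$ determines $(a,b)$. This handles the zero-dropping cases $b=0$ and $a=1$ uniformly.
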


\begin{proof}
This is the local degree formula established in \cite{LyudLocal}, rewritten in the ordered-support notation used here.
\end{proof}

The second form will be more convenient for global degree geometry. It decomposes the degree into three parts: a quadratic support term $r(r-1)$, a multiplicity bonus, and a gap bonus.

\subsection{First consequences}

\begin{proposition}[Conjugation invariance of degree]\label{prop:conj-degree}
For every $n$ and every $\lambda\vdash n$,
\[
\deg(\lambda')=\deg(\lambda).
\]
Consequently, for every $d$,
\[
\lambda\in D_d(n)\quad\Longleftrightarrow\quad \lambda'\in D_d(n),
\]
so each degree layer $D_d(n)$, the spectrum $\Spec_D(n)$, and the extremal set $\operatorname{MaxDeg}(n)$ are conjugation-invariant.
\end{proposition}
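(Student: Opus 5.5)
Two proofs suggest themselves. One is to observe that conjugation is a graph automorphism of $G_n$. The other is to verify directly that the degree formula of Proposition~\ref{prop:degree-formula} is conjugation-invariant. I would use the formula, since the automorphism route requires checking that elementary transfers correspond under conjugation, which is not immediate. The plan is to show that $r$ is unchanged under conjugation, and that the multiplicity and gap data of $\lambda$ and $\lambda'$ are swapped as multisets.

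First I describe the conjugate in block form. Let $\lambda=(L_1^{m_1},\dots,L_r^{m_r})$, and put $M_k:=m_1+\cdots+m_k$. The Young diagram of $\lambda$ is a staircase with $r$ outer corners. The column lengths of $\lambda$ take the value $M_k$ exactly for columns $j$ with $L_{k+1}<j\le L_k$. Hence
\[
\lambda'=(M_r^{\,g_r},M_{r-1}^{\,g_{r-1}},\dots,M_1^{\,g_1}),
\]
with distinct part sizes $M_r>M_{r-1}>\cdots>M_1>0$. In particular $r(\lambda')=r$. The multiplicities of $\lambda'$ are $g_r,\dots,g_1$. Its gaps are $M_k-M_{k-1}=m_k$ (with $M_0=0$), which in the ordered notation for $\lambda'$ read $m_r,\dots,m_1$.

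Next I substitute into the second form of the degree formula. Writing primes for the data of $\lambda'$, the multiplicity bonus is
\[
\sum_i \mathbf 1_{m'_i>1}=\sum_k \mathbf 1_{g_k>1},
\]
and the gap bonus is
\[
\sum_i \mathbf 1_{g'_i>1}=\sum_k \mathbf 1_{m_k>1}.
\]
The two bonus sums are therefore interchanged, and since $r$ is also unchanged we get $\deg(\lambda')=\deg(\lambda)$. The layer statement follows because conjugation is an involution on partitions of $n$: it maps $D_d(n)$ into itself, and hence onto itself. The invariance of $\Spec_D(n)$ and of $\operatorname{MaxDeg}(n)=D_{\Delta_n}(n)$ then follows immediately.

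The only step needing care is the block description of $\lambda'$, specifically getting the index bookkeeping right, including the convention $L_{r+1}=0$, so that $g_r=L_r$ is correctly identified as the multiplicity of the largest part $M_r$ of $\lambda'$. I would check this by reading the column lengths of the diagram directly. All the remaining steps are formal.
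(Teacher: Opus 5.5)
Your proof is correct, but it takes a different route from the paper. The paper does not use the degree formula for this. It observes that an elementary transfer followed by reordering amounts to relocating a single cell of the Ferrers diagram, and this description is visibly symmetric under transposition. Hence $\lambda\sim\mu\iff\lambda'\sim\mu'$, and conjugation is an automorphism of $G_n$.

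That check is less delicate than you suggest, since adjacency just means the two diagrams share $n-1$ cells. It also buys more than your argument. It is independent of the imported formula in Proposition~\ref{prop:degree-formula}, and it carries neighbourhoods, cliques, and hence $K_n$ to their conjugate counterparts, not just degrees.

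Your computation relies on the imported formula, but it gives finer information: the explicit block form $\lambda'=(M_r^{g_r},\dots,M_1^{g_1})$, so $r(\lambda')=r(\lambda)$, $m'_i=g_{r+1-i}$ and $g'_i=m_{r+1-i}$. This is worth keeping. It shows that the gap bonus $g_j>1$ of $\lambda$ becomes the multiplicity bonus $m'_{r+1-j}>1$ of $\lambda'$, which carries the same weight $j$ in Proposition~\ref{prop:weighted-bonus}. So conjugation acts on the budget multiset $\{1,1,2,2,\dots,r,r\}$ by swapping the two copies of each weight. That immediately explains $(\delta_t^{\mathrm{top}})'=\delta_t^{\mathrm{bot}}$ and the self-conjugacy of $\delta_t^{\mathrm{tb}}$ (and more generally of $\Lambda_t(a,a;0)$), which the paper has to state separately.
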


\begin{proof}
If $\mu$ is obtained from $\lambda$ by an elementary transfer of one unit from one part to another, followed by reordering, then in Ferrers-diagram language $\mu$ is obtained from $\lambda$ by moving a single cell from one row to another. Under conjugation, rows and columns are exchanged, so the same operation becomes the transfer of one unit between two parts of the conjugate partition, again followed by reordering. Hence
\[
\lambda\sim\mu \quad\Longleftrightarrow\quad \lambda'\sim\mu',
\]
so conjugation defines an automorphism of $G_n$. In particular it preserves vertex degree, and the remaining claims follow immediately from the definitions.
\end{proof}

\begin{proposition}[Minimum degree]\label{prop:min-degree}
For every $n\ge 2$,
\[
\min_{\lambda\vdash n}\deg(\lambda)=1.
\]
Moreover, the only vertices of degree $1$ are the two antenna vertices
\[
(n),\qquad (1^n).
\]
\end{proposition}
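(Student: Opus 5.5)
The plan is to read everything off the second form of the degree formula in Proposition~\ref{prop:degree-formula},
\[
\deg(\lambda)=r(r-1)+\sum_{i=1}^r \mathbf 1_{m_i>1}+\sum_{i=1}^r \mathbf 1_{g_i>1},
\]
and to split the argument by the support size $r$. All three summands are nonnegative. If $r\ge 2$, then $r(r-1)\ge 2$, so $\deg(\lambda)\ge 2$. Hence every vertex of degree at most $1$ has $r=1$, that is, $\lambda=(L^{m})$ with $Lm=n$.

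For $r=1$ we have $r(r-1)=0$, and the formula reduces to $\deg(\lambda)=\mathbf 1_{m>1}+\mathbf 1_{L>1}$, since the only gap is $g_1=L_1-0=L$. There are three cases.
\begin{enumerate}[label=(\roman*)]
\item If $m=1$, then $\lambda=(n)$ and $L=n\ge 2$, so $\deg=1$.
\item If $L=1$, then $\lambda=(1^n)$ and $m=n\ge 2$, so $\deg=1$.
\item Otherwise $L\ge 2$ and $m\ge 2$, so $\deg=2$.
\end{enumerate}
Both indicators cannot vanish at once, because that would force $L=m=1$ and hence $n=1$. So no vertex has degree $0$ when $n\ge 2$.

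Putting the cases together, the minimum degree is $1$. It is attained exactly at $(n)$ and $(1^n)$, and these are distinct vertices because $n\ge 2$.

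As a check, and as an alternative to the formula, one can verify directly that $(n)$ has the single neighbour $(n-1,1)$. The vertex $(1^n)$ has the single neighbour $(2,1^{n-2})$, and the same also follows from Proposition~\ref{prop:conj-degree} by conjugation.

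There is no real obstacle here. The only point needing care is the gap convention $L_{r+1}=0$, which makes $g_1=L$ in the one-part case. That is exactly why $(n)$ gets its degree contribution from the gap term rather than from the multiplicity term.
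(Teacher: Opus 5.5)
Your proof is correct and follows the paper's argument essentially verbatim. Both use the second form of the degree formula, rule out $r\ge 2$ via $r(r-1)\ge 2$, and split the one-block case into $m_1=1$, $L_1=1$, and $L_1,m_1\ge 2$; your direct neighbour check for $(n)$ and $(1^n)$ is a harmless sanity confirmation that the paper omits.
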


\begin{proof}
Let
\[
\lambda=(L_1^{m_1},\dots,L_r^{m_r})\vdash n.
\]
If $r\ge 2$, then Proposition~\ref{prop:degree-formula} gives
\[
\deg(\lambda)\ge r(r-1)\ge 2.
\]
Hence a degree-$1$ vertex must have $r=1$, so $\lambda=(L_1^{m_1})$.

If $m_1=1$, then $\lambda=(n)$, and Proposition~\ref{prop:degree-formula} gives
\[
\deg(\lambda)=0+\mathbf 1_{g_1>1}=1,
\]
since $g_1=n>1$.

If $L_1=1$, then $\lambda=(1^n)$, and Proposition~\ref{prop:degree-formula} gives
\[
\deg(\lambda)=0+\mathbf 1_{m_1>1}=1
\]
for $n\ge 2$.

In all remaining one-block cases one has $L_1\ge 2$ and $m_1\ge 2$, hence both bonus terms are present and
\[
\deg(\lambda)=2.
\]
Therefore the only degree-$1$ vertices are $(n)$ and $(1^n)$.
\end{proof}

\begin{corollary}\label{cor:min-degree-class}
For every $n\ge 2$, one has
\[
1\in \Spec_D(n),
\]
and this value occurs with multiplicity exactly $2$.
\end{corollary}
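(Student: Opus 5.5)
This is an immediate consequence of Proposition~\ref{prop:min-degree}, so the plan is simply to unpack that proposition into the two claims of the corollary.

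First, for membership of $1$ in $\Spec_D(n)$: for $n\ge 2$ the vertex $(n)$ is a partition of $n$, and Proposition~\ref{prop:min-degree} gives $\deg((n))=1$. Hence $1=\deg((n))$ lies in $\Spec_D(n)$ by the definition of the spectrum.

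Second, for the multiplicity: the multiplicity of the value $1$ is $|D_1(n)|$. By the uniqueness part of Proposition~\ref{prop:min-degree}, $D_1(n)=\{(n),(1^n)\}$. To conclude $|D_1(n)|=2$, it remains to check that these two vertices are distinct. For $n\ge 2$, the partition $(n)$ has a part of size $n\ge 2$, while every part of $(1^n)$ equals $1$. So $(n)\neq(1^n)$.

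No step is a real obstacle. The only point needing explicit attention is the distinctness check, which is exactly why the hypothesis $n\ge 2$ is needed: for $n=1$ the two vertices coincide.
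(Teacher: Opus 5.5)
Your proof is correct and is essentially the paper's route: the paper gives no separate proof and treats the corollary as an immediate consequence of Proposition~\ref{prop:min-degree}, which identifies $D_1(n)=\{(n),(1^n)\}$. Your explicit check that $(n)\neq(1^n)$ for $n\ge 2$ is the one detail the paper leaves implicit.
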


At the opposite end of the degree spectrum, the degree formula suggests that the main extremal mechanism should be maximal support size. The next section begins the formal development of this principle.

\section{Exact extremal degree}
\label{sec:max-degree}

\subsection{A global mass decomposition behind the degree formula}

The degree formula of Proposition~\ref{prop:degree-formula} is local in appearance, but it admits a useful global reinterpretation. The point is that the support size $r$, the gaps $g_i$, and the multiplicities $m_i$ are not independent: they are constrained by the total mass $n$. The following identity makes this relation explicit.

Let
\[
T_r:=\frac{r(r+1)}{2}
\]
denote the $r$-th triangular number.

\begin{lemma}[Triangular decomposition of the mass]\label{lem:triangular-mass}
Let
\[
\lambda=(L_1^{m_1},L_2^{m_2},\dots,L_r^{m_r})\vdash n,\qquad L_1>\cdots>L_r>0,\qquad L_{r+1}=0,
\]
and let
\[
g_i:=L_i-L_{i+1}\qquad (1\le i\le r).
\]
Then
\[
n=T_r+\sum_{j=1}^r j(g_j-1)+\sum_{i=1}^r (m_i-1)L_i.
\]
\end{lemma}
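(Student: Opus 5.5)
We will prove the identity by splitting the mass of $\lambda$ into a ``one copy of each distinct part'' contribution and an ``extra copies'' contribution, and then rewriting the first through the gaps. Since $\lambda$ has exactly $m_i$ parts equal to $L_i$,
\[
n=\sum_{i=1}^r m_iL_i=\sum_{i=1}^r L_i+\sum_{i=1}^r (m_i-1)L_i .
\]
The second sum is already the multiplicity term in the statement, so it remains to show that
\[
\sum_{i=1}^r L_i=T_r+\sum_{j=1}^r j(g_j-1).
\]

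For this step we express each distinct part as a telescoping sum of gaps. Using $L_{r+1}=0$, we have $L_i=\sum_{j=i}^r g_j$. Summing over $i$ and exchanging the order of summation (Abel summation), the gap $g_j$ is counted once for each $i\le j$, that is, exactly $j$ times. Hence
\[
\sum_{i=1}^r L_i=\sum_{j=1}^r j\,g_j .
\]
We then write $g_j=1+(g_j-1)$ and use $\sum_{j=1}^r j=T_r$, which gives $\sum_j j g_j=T_r+\sum_j j(g_j-1)$. Combining this with the first display yields the lemma.

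The argument is elementary, and the only place that needs care is the index bookkeeping in the exchange of summation: one must check that gap $g_j$ carries weight $j$ rather than $r-j+1$. This follows from the convention $L_1>\cdots>L_r$ together with $g_j=L_j-L_{j+1}$, since $g_j$ lies below the $j$ largest distinct parts $L_1,\dots,L_j$. As an interpretive remark, all correction terms $j(g_j-1)$ and $(m_i-1)L_i$ are nonnegative and vanish exactly for the staircase $(r,r-1,\dots,1)$. The lemma therefore presents $n$ as the staircase mass $T_r$ plus a nonnegative budget that is spent on gap and multiplicity bonuses, which is the form used later in the section.
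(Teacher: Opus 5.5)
Your proposal is correct and follows essentially the paper's argument: split $n=\sum_i L_i+\sum_i (m_i-1)L_i$, telescope each $L_i$ through the gaps, and exchange the order of summation so that gap $g_j$ carries weight $j$. The only cosmetic difference is that the paper writes $L_i=(r+1-i)+\sum_{j\ge i}(g_j-1)$ before swapping sums, while you swap first and then split $g_j=1+(g_j-1)$.
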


\begin{proof}
Since
\[
n=\sum_{i=1}^r m_iL_i,
\]
it is enough to express $\sum_{i=1}^r L_i$ in terms of the gap data. Because
\[
L_i=(r+1-i)+\sum_{j=i}^r (g_j-1),
\]
we obtain
\[
\sum_{i=1}^r L_i=\sum_{i=1}^r (r+1-i)+\sum_{i=1}^r\sum_{j=i}^r (g_j-1).
\]
The first sum is $T_r$. Reversing the order in the double sum gives
\[
\sum_{i=1}^r\sum_{j=i}^r (g_j-1)=\sum_{j=1}^r j(g_j-1).
\]
Hence
\[
\sum_{i=1}^r L_i=T_r+\sum_{j=1}^r j(g_j-1).
\]
Now
\[
n=\sum_{i=1}^r m_iL_i=\sum_{i=1}^r L_i+\sum_{i=1}^r (m_i-1)L_i,
\]
and substituting the previous expression for $\sum_i L_i$ yields the claim.
\end{proof}

\begin{proposition}[Weighted bonus inequality]\label{prop:weighted-bonus}
Let
\[
\lambda=(L_1^{m_1},L_2^{m_2},\dots,L_r^{m_r})\vdash n,\qquad L_1>\cdots>L_r>0,
\]
and let
\[
g_i=L_i-L_{i+1}\qquad (1\le i\le r),\qquad L_{r+1}=0.
\]
Set
\[
s:=n-T_r,\qquad T_r=\frac{r(r+1)}{2}.
\]
Then
\[
\sum_{j=1}^r j\,\mathbf 1_{g_j>1}+\sum_{i=1}^r (r+1-i)\,\mathbf 1_{m_i>1}\le s.
\]
\end{proposition}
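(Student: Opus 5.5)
The plan is to start from Lemma~\ref{lem:triangular-mass}, which gives
\[
s=n-T_r=\sum_{j=1}^r j(g_j-1)+\sum_{i=1}^r (m_i-1)L_i ,
\]
and to bound each of the two sums from below termwise. Every summand on the right is nonnegative, because $g_j\ge 1$ and $m_i\ge 1$. It therefore suffices to prove two pointwise inequalities:
\[
j(g_j-1)\ge j\,\mathbf 1_{g_j>1}\qquad\text{and}\qquad (m_i-1)L_i\ge (r+1-i)\,\mathbf 1_{m_i>1}.
\]
Summing these over all indices then yields the claimed bound.

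The gap inequality is immediate. If $g_j=1$, both sides vanish. If $g_j\ge 2$, then $g_j-1\ge 1$, so $j(g_j-1)\ge j$.

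The multiplicity inequality needs the lower bound $L_i\ge r+1-i$. This comes from the strict decrease $L_1>\cdots>L_r>0$: since $L_r\ge 1$ and consecutive distinct parts differ by at least $1$, we get $L_i\ge r+1-i$. Equivalently, in the decomposition $L_i=(r+1-i)+\sum_{j\ge i}(g_j-1)$ used in the proof of Lemma~\ref{lem:triangular-mass}, every $g_j-1$ is nonnegative. So if $m_i=1$ both sides vanish, and if $m_i\ge 2$ then $(m_i-1)L_i\ge L_i\ge r+1-i$.

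Adding the two families of inequalities and substituting into the identity from Lemma~\ref{lem:triangular-mass} gives the proposition. There is no real obstacle here. The only point needing care is using the staircase lower bound $L_i\ge r+1-i$ to convert the mass $(m_i-1)L_i$ into the weight $r+1-i$, rather than the cruder bound $L_i\ge 1$. The sharper bound is what produces the multiset of weights $\{1,1,2,2,\dots,r,r\}$ that later controls $\beta_r$.
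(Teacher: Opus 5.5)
Your proposal is correct and follows essentially the same route as the paper. Both start from the identity of Lemma~\ref{lem:triangular-mass}, bound each gap term by $j\,\mathbf 1_{g_j>1}$ and each multiplicity term by $(r+1-i)\,\mathbf 1_{m_i>1}$ using the staircase bound $L_i\ge r+1-i$, and then sum.
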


\begin{proof}
By Lemma~\ref{lem:triangular-mass},
\[
s=\sum_{j=1}^r j(g_j-1)+\sum_{i=1}^r (m_i-1)L_i.
\]
If $g_j>1$, then $g_j-1\ge 1$, hence
\[
j(g_j-1)\ge j\,\mathbf 1_{g_j>1}.
\]
Also, since $L_1>\cdots>L_r>0$, one has
\[
L_i\ge r+1-i\qquad (1\le i\le r).
\]
Therefore, if $m_i>1$, then $m_i-1\ge 1$, and hence
\[
(m_i-1)L_i\ge (r+1-i)\,\mathbf 1_{m_i>1}.
\]
Summing these inequalities and using the expression for $s$, we obtain the claim.
\end{proof}

\begin{definition}
For integers $r\ge 1$ and $s\ge 0$, let $\beta_r(s)$ denote the maximal cardinality of a submultiset of
\[
\{1,1,2,2,\dots,r,r\}
\]
whose total weight is at most $s$.

Equivalently, $\beta_r(s)$ is the maximal number of bonus terms that can be activated under the weighted budget $s$. Here one copy of the weight $j$ corresponds to the gap bonus $g_j>1$, while the other corresponds to the multiplicity bonus carried by the $(r+1-j)$-st support level, that is, to the condition $m_{r+1-j}>1$.
\end{definition}

Thus the multiplicity bonuses are indexed from the bottom upward, so that the bottom-most multiplicities carry the smallest weights in the budget and match the second copy of each weight $j$.

\begin{corollary}[Support-wise degree bound]\label{cor:support-wise-bound}
Let $\lambda\vdash n$ have support size $r$, and let $s=n-T_r$. Then
\[
\deg(\lambda)\le r(r-1)+\beta_r(s).
\]
In particular,
\[
\deg(\lambda)\le r(r+1).
\]
\end{corollary}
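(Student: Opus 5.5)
The first bound follows by combining Proposition~\ref{prop:degree-formula} with Proposition~\ref{prop:weighted-bonus}; the second from the trivial bound $\beta_r(s)\le 2r$. Let $\lambda=(L_1^{m_1},\dots,L_r^{m_r})\vdash n$ and set $s=n-T_r$. By Lemma~\ref{lem:triangular-mass}, $s$ is a sum of nonnegative terms, so $s\ge 0$ and $\beta_r(s)$ is defined.

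By the second form of Proposition~\ref{prop:degree-formula},
\[
\deg(\lambda)=r(r-1)+B,\qquad B:=\sum_{j=1}^r \mathbf 1_{g_j>1}+\sum_{i=1}^r \mathbf 1_{m_i>1}.
\]
It therefore suffices to show $B\le\beta_r(s)$. To do this, I will realize the active bonuses as a submultiset of $\{1,1,2,2,\dots,r,r\}$ of total weight at most $s$.

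Define the submultiset $S$ as follows:
\begin{itemize}[label={}]
\item one copy of $j$ for each $j$ with $g_j>1$ (drawn from the first copies);
\item one copy of $j=r+1-i$ for each $i$ with $m_i>1$ (drawn from the second copies).
\end{itemize}
The map $i\mapsto r+1-i$ is a bijection of $\{1,\dots,r\}$, so each weight $j$ is used at most twice, once per family. Hence $S$ is a genuine submultiset of $\{1,1,\dots,r,r\}$, and $|S|=B$.

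Its total weight is
\[
\sum_{j=1}^r j\,\mathbf 1_{g_j>1}+\sum_{i=1}^r (r+1-i)\,\mathbf 1_{m_i>1},
\]
which is at most $s$ by Proposition~\ref{prop:weighted-bonus}. By the definition of $\beta_r(s)$ as a maximum, $B=|S|\le\beta_r(s)$, which gives the first bound.

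For the second claim, any submultiset of a $2r$-element multiset has at most $2r$ elements, so $\beta_r(s)\le 2r$. Then
\[
\deg(\lambda)\le r(r-1)+2r=r(r+1).
\]

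The only point requiring care is the bookkeeping showing that the two families of bonuses never compete for the same copy of a weight. The reindexing $j=r+1-i$ for the multiplicity bonuses handles this, so no step is a real obstacle.
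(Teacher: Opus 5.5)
Your proposal is correct and follows the paper's own argument: the degree formula isolates the bonus count, Proposition~\ref{prop:weighted-bonus} places the active bonuses within the budget $s$, and $\beta_r(s)\le 2r$ gives the second bound. Your two extra checks make explicit what the paper leaves implicit: that $s\ge 0$, and that the two bonus families use distinct copies of each weight via $i\mapsto r+1-i$.
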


\begin{proof}
By Proposition~\ref{prop:degree-formula},
\[
\deg(\lambda)=r(r-1)+\sum_{j=1}^r \mathbf 1_{g_j>1}+\sum_{i=1}^r \mathbf 1_{m_i>1}.
\]
By Proposition~\ref{prop:weighted-bonus}, the activated bonus terms satisfy the weighted budget constraint
\[
\sum_{j=1}^r j\,\mathbf 1_{g_j>1}+\sum_{i=1}^r (r+1-i)\,\mathbf 1_{m_i>1}\le s.
\]
Hence the total number of active bonus terms is at most $\beta_r(s)$, proving the first inequality. The second follows from the trivial bound $\beta_r(s)\le 2r$.
\end{proof}

\begin{theorem}[Maximal-support principle for extremal degree]\label{thm:max-support-principle}
Let
\[
\rho(n):=\max\{r:T_r\le n\}=\left\lfloor\frac{\sqrt{8n+1}-1}{2}\right\rfloor.
\]
If $\lambda\in \operatorname{MaxDeg}(n)$, then
\[
|\supp(\lambda)|=\rho(n).
\]
\end{theorem}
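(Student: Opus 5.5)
The plan is to bound the support size from above by a counting argument, and then to rule out every smaller support size by comparing the crude bound of Corollary~\ref{cor:support-wise-bound} with an explicit competitor of support $\rho:=\rho(n)$. Write $n=T_\rho+\nu$ with $0\le\nu\le\rho$.

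\emph{Upper bound on support.} If $\lambda\vdash n$ has support size $r$, then its distinct parts satisfy $L_i\ge r+1-i$. Hence $n\ge\sum_i L_i\ge T_r$, and so $r\le\rho$. This is also the case $s\ge 0$ of Lemma~\ref{lem:triangular-mass}.

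\emph{A competitor of support $\rho$.} I would take $\mu:=(\rho+\nu,\rho-1,\rho-2,\dots,1)\vdash n$. All its multiplicities equal $1$, and the gaps $g_2,\dots,g_\rho$ all equal $1$. By Proposition~\ref{prop:degree-formula},
\[
\deg(\mu)=\rho(\rho-1)+\mathbf 1_{\nu\ge1}.
\]
Therefore $\Delta_n\ge\rho(\rho-1)+\mathbf 1_{\nu\ge 1}$.

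\emph{Excluding smaller support.} Let $\lambda$ have support size $r\le\rho-1$. I would split into two cases.

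First, suppose $r\le\rho-2$. Corollary~\ref{cor:support-wise-bound} gives $\deg(\lambda)\le r(r+1)\le(\rho-2)(\rho-1)$. This is strictly less than $\rho(\rho-1)\le\Delta_n$ whenever $\rho\ge2$.

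Second, suppose $r=\rho-1$. The crude bound only gives $\deg(\lambda)\le(\rho-1)\rho$. This suffices when $\nu\ge1$, because the competitor then has degree $\rho(\rho-1)+1$. When $\nu=0$ the crude bound ties with the competitor, so the refined bound $\deg(\lambda)\le r(r-1)+\beta_r(s)$ is needed, where $s=n-T_{\rho-1}=\rho$.
\begin{itemize}[label=$\circ$]
\item Attaining $\beta_r(s)=2r$ requires using the whole multiset $\{1,1,\dots,r,r\}$, whose total weight is $2T_{\rho-1}=\rho(\rho-1)$.
\item So if $\rho(\rho-1)>\rho$, i.e.\ $\rho\ge3$, then $\beta_r(s)\le 2r-1$.
\item Hence $\deg(\lambda)\le\rho(\rho-1)-1<\Delta_n$.
\end{itemize}

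\emph{Small values.} The remaining cases are $\rho\le 2$, i.e.\ $n\le 5$; together with $\rho=2,\ \nu=0$, this leaves finitely many partitions to check by hand.
\begin{itemize}[label=$\circ$]
\item For $n=1$ the statement is trivial.
\item For $n=2$, both vertices have support $1=\rho$.
\item For $n=3,4,5$ the one-block partitions have degree at most $2$, by the proof of Proposition~\ref{prop:min-degree}. Support-$2$ partitions such as $(2,1)$ with $\deg=2$, or $(3,1)$ with $\deg=3$, compete with them. These need a direct comparison; in particular at $n=3$ I would verify whether $(2,1)$ beats the one-block vertices, which all have degree $1$ there.
\end{itemize}

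\emph{Main obstacle.} The main delicacy is exactly the tie case $r=\rho-1$, $\nu=0$. There the crude bound $r(r+1)$ equals the staircase degree, and one must use that the full bonus set is too expensive for the budget $s=\rho$. The same budget reasoning with $s\le 2\rho$ (valid since $n<T_{\rho+1}$) would also handle $r=\rho-1$ uniformly for $\rho\ge 5$, which I would use as a cross-check.
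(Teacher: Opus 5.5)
Your proof is correct and is essentially the paper's argument: the crude bound $r(r+1)$ eliminates $r\le\rho-2$, and for $r=\rho-1$ the full bonus multiset $\{1,1,\dots,r,r\}$, of weight $\rho(\rho-1)$, exceeds the budget, which forces $\beta_r(s)\le 2r-1$. The one refinement is that you count the top-gap bonus of the competitor $(\rho+\nu,\rho-1,\dots,1)$ (the same partition the paper uses), which confines the budget step to $\nu=0$, $s=\rho$, where it works for all $\rho\ge 3$; the hand check then shrinks from the paper's $n\le 9$ to the single case $n=3$, settled by $\deg(2,1)=2>1=\deg(3)=\deg(1^3)$, and your cross-check with $s\le 2\rho$ in fact already works from $\rho\ge 4$, not only $\rho\ge 5$.
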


\begin{proof}
Let $\rho=\rho(n)$. Since $T_\rho\le n$, the staircase partition
\[
\delta_\rho:=(\rho,\rho-1,\dots,1)
\]
is a partition of $T_\rho$, and by adding the remaining $n-T_\rho$ units to the largest part one obtains a partition of $n$ with support size $\rho$. Therefore
\[
\Delta_n\ge \rho(\rho-1),
\]
because the support term alone already contributes $\rho(\rho-1)$.

Now let $\lambda\vdash n$ have support size $r<\rho$.

If $r\le \rho-2$, then Corollary~\ref{cor:support-wise-bound} gives
\[
\deg(\lambda)\le r(r+1)\le (\rho-2)\rho<\rho(\rho-1)\le \Delta_n.
\]
Hence such a partition cannot be extremal.

Suppose next that $r=\rho-1$, and write
\[
s=n-T_r.
\]
Since $T_\rho\le n<T_{\rho+1}$, one has
\[
\rho\le s\le 2\rho.
\]
By Corollary~\ref{cor:support-wise-bound},
\[
\deg(\lambda)\le r(r-1)+\beta_r(s).
\]
Now the total weight of the full multiset
\[
\{1,1,2,2,\dots,r,r\}
\]
is
\[
2(1+\cdots+r)=r(r+1)=\rho(\rho-1).
\]
For $\rho\ge 4$, we have
\[
s\le 2\rho<\rho(\rho-1),
\]
so not all $2r$ bonus terms can be active. Hence
\[
\beta_r(s)\le 2r-1,
\]
and therefore
\[
\deg(\lambda)\le r(r-1)+(2r-1)=r(r+1)-1=\rho(\rho-1)-1.
\]
Thus $\deg(\lambda)<\Delta_n$, so $\lambda$ cannot be extremal.

It remains only to handle the cases $\rho\le 3$, which correspond to $n\le 9$. For $n=1,2$ the claim is immediate. For $n=3,4,5$, where $\rho=2$, the partitions with support size $1$ have maximal degree $1,2,1$, respectively, while $\Delta_n=2,3,4$. For $n=6,7,8,9$, where $\rho=3$, the partitions with support size $2$ have maximal degree $4,4,5,5$, respectively, while $\Delta_n=6,7,8,8$. Hence no partition with support size $\rho(n)-1$ is extremal in these small cases either. Therefore every extremal vertex has support size $\rho(n)$.
\end{proof}

\subsection{Explicit evaluation of the bonus budget function}

For $k\ge 0$, define
\[
w(k):=\begin{cases}0, & k=0,\\[1mm] q^2, & k=2q-1\ \text{for some } q\ge 1,\\[1mm] q(q+1), & k=2q\ \text{for some } q\ge 1.\end{cases}
\]
Thus $w(k)$ is the total weight of the $k$ smallest elements of the infinite multiset
\[
\{1,1,2,2,3,3,\dots\}.
\]

\begin{lemma}[Minimal weight for $k$ bonus terms]\label{lem:min-weight-k-bonus}
Let $1\le k\le 2r$. Among all $k$-element submultisets of
\[
\{1,1,2,2,\dots,r,r\},
\]
the minimal possible total weight is $w(k)$.
\end{lemma}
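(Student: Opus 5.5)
The plan is a sorting (rearrangement) argument, followed by a direct evaluation of the sum of the $k$ smallest elements. We list the multiset $\{1,1,2,2,\dots,r,r\}$ in nondecreasing order as $a_1\le a_2\le\cdots\le a_{2r}$, so that $a_{2j-1}=a_{2j}=j$. Equivalently, $a_i=\lceil i/2\rceil$.

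The first step is the lower bound. Take any $k$-element submultiset and write its elements in nondecreasing order as $b_1\le\cdots\le b_k$. We claim that $b_i\ge a_i$ for every $i$. Indeed, $b_1,\dots,b_i$ are $i$ elements of the multiset, all at most $b_i$. Each value occurs at most twice, so at most $2b_i$ elements of the multiset are $\le b_i$. Hence $i\le 2b_i$, which gives $b_i\ge\lceil i/2\rceil=a_i$. Summing over $i$ shows that the total weight is at least $\sum_{i=1}^k a_i$.

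The second step is attainment. The submultiset $\{a_1,\dots,a_k\}$ itself is a legitimate $k$-element submultiset. For $k\le 2r$ it uses each value $j\le r$ at most twice, so its weight $\sum_{i\le k}a_i$ is realized, and this value is therefore the minimum.

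The third step evaluates the sum. If $k=2q$, then
\[
\sum_{i\le k}a_i=2(1+\cdots+q)=q(q+1).
\]
If $k=2q-1$, then
\[
\sum_{i\le k}a_i=q(q+1)-q=q^2.
\]
Both agree with $w(k)$. This also confirms the paper's remark that $w(k)$ is the weight of the $k$ smallest elements, and shows that the restriction to values at most $r$ is harmless because $k\le 2r$.

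The only point needing care is the counting inequality $i\le 2b_i$ in the first step; it uses nothing but the fact that each value has multiplicity two. Everything else is routine, so no real obstacle is expected.
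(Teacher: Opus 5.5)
Your proof is correct and follows the same route as the paper, which simply asserts that the minimum is achieved by taking the $k$ smallest elements of $\{1,1,2,2,\dots,r,r\}$. Your counting inequality $b_i\ge\lceil i/2\rceil$ supplies the justification for that step, which the paper leaves implicit, and your evaluation of the sum matches $w(k)$ in both parity cases.
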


\begin{proof}
To minimize the total weight of a $k$-element submultiset, one must choose the $k$ smallest available elements. Since the ambient multiset contains exactly two copies of each positive integer from $1$ to $r$, the claim follows immediately.
\end{proof}

\begin{proposition}[Explicit formula for $\beta_r(s)$]\label{prop:beta-formula}
For every $r\ge 1$ and $s\ge 0$,
\[
\beta_r(s)=\max\{k\in\{0,1,\dots,2r\}: w(k)\le s\}.
\]
Equivalently, $\beta_r(s)$ is the unique integer $k\in[0,2r]$ satisfying
\[
w(k)\le s < w(k+1),
\]
with the convention $w(2r+1)=+\infty$.

In particular, if $0\le s<r(r+1)$, then $\beta_r(s)$ is given by the square--pronic rule: for $q=\lfloor \sqrt{s}\rfloor$,
\[
\beta_r(s)=\begin{cases}2q-1, & q^2\le s<q(q+1),\\[1mm] 2q, & q(q+1)\le s<(q+1)^2,\end{cases}
\]
provided $q<r$; and if $r^2\le s<r(r+1)$, then
\[
\beta_r(s)=2r-1.
\]
Finally, for $s\ge r(r+1)$,
\[
\beta_r(s)=2r.
\]
\end{proposition}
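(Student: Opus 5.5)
The plan is to reduce everything to Lemma~\ref{lem:min-weight-k-bonus} together with the monotonicity of $w$.

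\emph{Step 1: the first formula.} A $k$-element submultiset of $\{1,1,\dots,r,r\}$ with total weight at most $s$ exists if and only if the cheapest such submultiset has weight at most $s$. By Lemma~\ref{lem:min-weight-k-bonus}, this happens exactly when $w(k)\le s$. For $k=0$ the empty submultiset has weight $0\le s$, so the set of admissible $k$ is nonempty. Taking the maximum over admissible $k$ gives
\[
\beta_r(s)=\max\{k\in\{0,\dots,2r\}: w(k)\le s\}.
\]

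\emph{Step 2: the interval characterization.} The function $w$ is strictly increasing on $\{0,\dots,2r\}$. Indeed, $w(k+1)-w(k)$ is the $(k+1)$-st smallest element of $\{1,1,2,2,\dots\}$, which is positive; explicitly, $q^2-(q-1)q=q$ and $q(q+1)-q^2=q$. Hence $\{k: w(k)\le s\}$ is an initial segment of $\{0,\dots,2r\}$. With the convention $w(2r+1)=+\infty$, its maximum is the unique $k$ satisfying $w(k)\le s<w(k+1)$.

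\emph{Step 3: the square--pronic rule.} Let $q=\lfloor\sqrt s\rfloor$, so $q^2\le s<(q+1)^2$. The relevant values of $w$ are
\[
w(2q-1)=q^2,\qquad w(2q)=q(q+1),\qquad w(2q+1)=(q+1)^2.
\]
If $q^2\le s<q(q+1)$, the unique $k$ from Step 2 is $2q-1$; this requires $q\ge1$, which holds since $s\ge q^2$ and $s\ge1$ in this subcase. If $q(q+1)\le s<(q+1)^2$, the unique $k$ is $2q$. Both values lie in $[0,2r]$ and the upper bracket is finite precisely when $q<r$, since then $2q+1\le 2r$.

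The case $s=0$ gives $q=0$ and lands in the second subcase with $k=0$. This is correct because $w(0)=0\le 0<1=w(1)$, although the first subcase is vacuous there.

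If $r^2\le s<r(r+1)$, then $w(2r-1)=r^2\le s<w(2r)$, so $\beta_r(s)=2r-1$. If $s\ge r(r+1)=w(2r)$, then $k=2r$ is admissible and is the largest index allowed, so $\beta_r(s)=2r$.

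\emph{Main obstacle.} The only delicate point is bookkeeping at the boundaries. One must confirm that the hypotheses $q<r$, $q=r$, and $s\ge r(r+1)$ exhaust all $s\ge0$, and that the cap $k\le 2r$ is never violated when $q<r$. Both follow from $(q+1)^2\le r^2<r(r+1)$ when $q<r$. A secondary check is that Lemma~\ref{lem:min-weight-k-bonus} is stated only for $k\ge1$; the case $k=0$ must be handled separately, and it is trivial.
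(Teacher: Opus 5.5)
Your proposal is correct and follows the same route as the paper: you reduce feasibility of $k$ bonus terms to $w(k)\le s$ via Lemma~\ref{lem:min-weight-k-bonus}, and you read off the piecewise rule from $w(2q-1)=q^2$ and $w(2q)=q(q+1)$. Your additional checks fill in details the paper leaves implicit: the strict monotonicity of $w$ (which is what justifies the ``unique $k$'' reformulation), the trivial $k=0$ case, and the boundary bookkeeping at $q=r$ and $s\ge r(r+1)$.
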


\begin{proof}
By definition, $\beta_r(s)$ is the maximum number of selected items under total weight at most $s$. By Lemma~\ref{lem:min-weight-k-bonus}, a $k$-element choice is feasible if and only if
\[
w(k)\le s.
\]
Hence the displayed formula follows. The explicit piecewise form is obtained from
\[
w(2q-1)=q^2,\qquad w(2q)=q(q+1).
\]
\end{proof}

\begin{corollary}[Explicit support-wise degree bound]\label{cor:explicit-support-wise-bound}
Let $\lambda\vdash n$ have support size $r$, and set
\[
s=n-T_r.
\]
Then
\[
\deg(\lambda)\le r(r-1)+\beta_r(s),
\]
where $\beta_r(s)$ is given explicitly by Proposition~\ref{prop:beta-formula}.
\end{corollary}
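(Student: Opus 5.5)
The statement is a direct specialization of Corollary~\ref{cor:support-wise-bound}, so I will derive it from there rather than redo any combinatorics. Let $\lambda\vdash n$ have support size $r$ and set $s=n-T_r$. Corollary~\ref{cor:support-wise-bound} already gives
\[
\deg(\lambda)\le r(r-1)+\beta_r(s),
\]
where $\beta_r(s)$ is the knapsack-type maximum over submultisets of $\{1,1,2,2,\dots,r,r\}$. The only new content is that this quantity can now be evaluated in closed form. I will therefore substitute the characterization of Proposition~\ref{prop:beta-formula}, namely $\beta_r(s)=\max\{k\le 2r: w(k)\le s\}$, together with its square--pronic piecewise description.

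The steps, in order, are as follows.
\begin{enumerate}
\item Check that $s\ge 0$. This holds because $\lambda$ has $r$ distinct positive parts, so $n\ge \sum_i L_i\ge T_r$; this is the case $m_i\equiv1$, $g_j\equiv1$ of Lemma~\ref{lem:triangular-mass}. Hence Proposition~\ref{prop:beta-formula} applies.
\item Invoke Corollary~\ref{cor:support-wise-bound} for the inequality.
\item Replace $\beta_r(s)$ by the explicit expression of Proposition~\ref{prop:beta-formula}, which rests on Lemma~\ref{lem:min-weight-k-bonus}. That lemma says a $k$-element selection is feasible exactly when the $k$ smallest weights fit, i.e.\ when $w(k)\le s$.
\end{enumerate}

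There is essentially no obstacle. The one point needing care is step~1, since Proposition~\ref{prop:beta-formula} is stated only for $s\ge 0$. I will also note that the regimes $r^2\le s<r(r+1)$ and $s\ge r(r+1)$ are covered by the boundary clauses of that proposition, so the bound holds uniformly in $s$.
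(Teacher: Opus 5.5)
Your proposal is correct and follows the paper's route exactly: the statement is Corollary~\ref{cor:support-wise-bound} with $\beta_r(s)$ rewritten via the closed form of Proposition~\ref{prop:beta-formula}, and the paper gives no further argument. Your explicit check that $s\ge 0$ is a sensible addition, though it is already implicit in Lemma~\ref{lem:triangular-mass} (equivalently, in Proposition~\ref{prop:weighted-bonus}, whose left-hand side is nonnegative).
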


\subsection{First exact consequences and staircase perturbations}

For $t\ge 2$, let
\[
\delta_t:=(t,t-1,\dots,2,1)
\]
be the staircase partition of $T_t=t(t+1)/2$. We introduce three elementary perturbations of $\delta_t$:
\[
\delta_t^{\mathrm{top}}:=(t+1,t-1,t-2,\dots,2,1),
\]
\[
\delta_t^{\mathrm{bot}}:=(t,t-1,\dots,2,1,1),
\]
and
\[
\delta_t^{\mathrm{tb}}:=(t+1,t-1,t-2,\dots,2,1,1).
\]

\begin{proposition}[First staircase perturbations]\label{prop:first-staircase-perturbations}
For every $t\ge 2$,
\[
|\delta_t^{\mathrm{top}}|=|\delta_t^{\mathrm{bot}}|=T_t+1,\qquad |\delta_t^{\mathrm{tb}}|=T_t+2.
\]
Moreover,
\[
\deg(\delta_t^{\mathrm{top}})=\deg(\delta_t^{\mathrm{bot}})=t(t-1)+1,
\]
and
\[
\deg(\delta_t^{\mathrm{tb}})=t(t-1)+2.
\]
Finally,
\[
(\delta_t^{\mathrm{top}})'=\delta_t^{\mathrm{bot}},\qquad (\delta_t^{\mathrm{tb}})'=\delta_t^{\mathrm{tb}}.
\]
\end{proposition}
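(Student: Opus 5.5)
The proof is a direct verification from Proposition~\ref{prop:degree-formula}. For each of the three perturbations we write down the block form, the support size $r$, the multiplicities $m_i$ and the gaps $g_i$, and then count the bonus terms. The sizes are immediate: $\delta_t^{\mathrm{top}}$ adds one unit to the largest part of $\delta_t$, $\delta_t^{\mathrm{bot}}$ appends a new part $1$, and $\delta_t^{\mathrm{tb}}$ does both. So the sums are $T_t+1$, $T_t+1$ and $T_t+2$.

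For the degrees, all three shapes have the same distinct part sizes as the corresponding staircase modifications, so $r=t$ in every case. This is where $t\ge 2$ is used: it guarantees that $t+1$ and $t-1$ are distinct from the other parts, so no support level is created or destroyed.
\begin{itemize}[label={}]
\item For $\delta_t^{\mathrm{top}}=(t+1,t-1,\dots,1)$, every multiplicity is $1$. The gaps are $g_1=(t+1)-(t-1)=2$ and $g_i=1$ for $2\le i\le t$, including $g_t=1-0=1$. This gives exactly one bonus, so the degree is $t(t-1)+1$.
\item For $\delta_t^{\mathrm{bot}}=(t,\dots,2,1^2)$, all gaps equal $1$, and the only multiplicity exceeding $1$ is $m_t=2$. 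Again there is exactly one bonus.
\item For $\delta_t^{\mathrm{tb}}$, both the gap bonus $g_1=2$ and the multiplicity bonus $m_t=2$ are active, and no others are. This gives $t(t-1)+2$.
\end{itemize}
In the case $t=2$ we check that $\delta_2^{\mathrm{top}}=(3,1)$ has $g_1=2$ and $g_2=1$, so the generic computation still applies.

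For the conjugation identities, we compute column lengths in the Ferrers diagram. In $\delta_t^{\mathrm{top}}$, the rows have lengths $t+1,t-1,\dots,1$, so column $j$ has length equal to the number of rows of length at least $j$. Column $1$ has length $t$. For $2\le j\le t-1$, column $j$ has length $t-j+1$: the first row has length $t+1\ge j$, and among the remaining rows $t-1,\dots,1$ exactly $t-j$ have length at least $j$. Column $t$ has length $1$, and column $t+1$ has length $1$. The conjugate is therefore $(t,t-1,\dots,2,1,1)=\delta_t^{\mathrm{bot}}$, and conjugating again gives the reverse relation. For $\delta_t^{\mathrm{tb}}$, which has $t+1$ rows, the same count gives column $1$ of length $t+1$. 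For $2\le j\le t$, column $j$ has length $t-j+1$, because the appended row of length $1$ does not reach column $j$. Column $t+1$ has length $1$. This yields $(t+1,t-1,\dots,1,1)=\delta_t^{\mathrm{tb}}$, so this partition is self-conjugate.

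The only real point of care is the column-length bookkeeping at the boundary indices $j=t$ and $j=t+1$, together with the degenerate case $t=2$, where $\delta_2^{\mathrm{tb}}=(3,1,1)$. We check these explicitly. Everything else is routine application of Proposition~\ref{prop:degree-formula}. As a consistency check, the degree of $\delta_t^{\mathrm{bot}}$ is forced by that of $\delta_t^{\mathrm{top}}$ through Proposition~\ref{prop:conj-degree}, and it agrees with the direct count above.
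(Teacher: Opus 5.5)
Your verification is correct and takes the same route as the paper, which simply invokes Proposition~\ref{prop:degree-formula} together with the explicit gap and multiplicity patterns. Your column-length computation for the two conjugation identities spells out, correctly, a step the paper leaves implicit.
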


\begin{proof}
Immediate from Proposition~\ref{prop:degree-formula} and the explicit gap and multiplicity patterns.
\end{proof}

\begin{theorem}[Exact extremal theorem for triangular numbers]\label{thm:triangular-extremal}
If
\[
n=T_t=\frac{t(t+1)}{2},
\]
then
\[
\Delta_n=t(t-1),
\]
and the unique vertex of degree $\Delta_n$ is the staircase partition
\[
\delta_t=(t,t-1,\dots,1).
\]
\end{theorem}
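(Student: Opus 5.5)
The plan is to combine the maximal-support principle with the support-wise budget bound at budget zero, and then to read uniqueness directly off the triangular mass decomposition.

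First, for $n=T_t$ we have $\rho(n)=t$, since $T_t\le n<T_{t+1}$. By Theorem~\ref{thm:max-support-principle}, every $\lambda\in\operatorname{MaxDeg}(n)$ has support size exactly $t$. For such $\lambda$ the residual budget is $s=n-T_t=0$. Since $w(1)=1>0$, Proposition~\ref{prop:beta-formula} gives $\beta_t(0)=0$. Corollary~\ref{cor:support-wise-bound} then yields $\deg(\lambda)\le t(t-1)$ for every partition of support size $t$.

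For the lower bound, the staircase $\delta_t$ has support size $t$. It has all multiplicities $m_i=1$ and all gaps $g_i=1$, including $g_t=L_t-0=1$. So Proposition~\ref{prop:degree-formula} gives $\deg(\delta_t)=t(t-1)$. Because every maximizer has support $t$ and all such partitions have degree at most $t(t-1)$, we conclude $\Delta_n=t(t-1)$.

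For uniqueness, let $\lambda\vdash T_t$ have support size $t$. Lemma~\ref{lem:triangular-mass} gives
\[
0=\sum_{j=1}^t j(g_j-1)+\sum_{i=1}^t (m_i-1)L_i .
\]
Every term is nonnegative, since $g_j\ge 1$, $m_i\ge 1$ and $L_i>0$, so every term vanishes. Hence all $g_j=1$ and all $m_i=1$, which forces $L_i=t+1-i$ and $\lambda=\delta_t$. In fact $\delta_t$ is the only partition of $T_t$ with $t$ distinct part sizes, so any maximizer, which must have support $t$ by the maximal-support principle, equals $\delta_t$.

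The only delicate point is the small cases: Theorem~\ref{thm:max-support-principle} was verified by hand for $\rho\le 3$. The cases $n=1,3,6$ are covered there, so no extra work is needed beyond citing that theorem. I would still double-check $t=1$ ($n=1$, with $\Delta_1=0$ attained by $(1)$) if $t\ge1$ is allowed, and $t=2$ ($n=3$, where $(2,1)$ has degree $2$ while $(3)$ and $(1^3)$ have degree $1$).
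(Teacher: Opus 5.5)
Your proof is correct and follows essentially the paper's argument: the maximal-support principle forces every maximizer to have support size $t$, and the triangular mass decomposition with zero surplus then forces $\delta_t$, whose degree is $t(t-1)$. Your separate upper bound via $\beta_t(0)=0$ is valid but redundant. Once $\delta_t$ is shown to be the only partition of $T_t$ with support size $t$, it is automatically the unique maximizer, and $\Delta_n=\deg(\delta_t)$.
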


\begin{proof}
By Theorem~\ref{thm:max-support-principle}, every extremal partition must have support size $t$. Since $T_t$ is the minimal possible mass of a partition with $t$ distinct positive part sizes, Lemma~\ref{lem:triangular-mass} forces all support gaps and multiplicities to be minimal, hence the only possibility is $\delta_t$. The degree formula then gives $\deg(\delta_t)=t(t-1)$.
\end{proof}

\begin{theorem}[Exact extremal classification at $T_t+1$]\label{thm:triangular-plus-one}
Let $t\ge 2$, and set
\[
n=T_t+1.
\]
Then
\[
\Delta_n=t(t-1)+1,
\]
and
\[
\operatorname{MaxDeg}(n)=\{\delta_t^{\mathrm{top}},\delta_t^{\mathrm{bot}}\}.
\]
In particular,
\[
m_\Delta(T_t+1)=2.
\]
\end{theorem}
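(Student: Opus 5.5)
The plan is to combine the maximal-support principle with the budget bound, and then classify the equality cases through the exact mass identity.

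\emph{Step 1 (upper bound).} Since $n=T_t+1<T_{t+1}$ for $t\ge 2$, we have $\rho(n)=t$. By Theorem~\ref{thm:max-support-principle}, every extremal vertex has support size exactly $t$, so it suffices to consider such $\lambda$. For these, $s=n-T_t=1$, and Corollary~\ref{cor:support-wise-bound} with Proposition~\ref{prop:beta-formula} gives
\[
\deg(\lambda)\le t(t-1)+\beta_t(1)=t(t-1)+1,
\]
because $w(1)=1\le 1<2=w(2)$.

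\emph{Step 2 (lower bound).} Proposition~\ref{prop:first-staircase-perturbations} shows that $\delta_t^{\mathrm{top}}$ and $\delta_t^{\mathrm{bot}}$ are partitions of $T_t+1$ of degree $t(t-1)+1$. Hence $\Delta_n=t(t-1)+1$, and both lie in $\operatorname{MaxDeg}(n)$.

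\emph{Step 3 (classification).} Let $\lambda$ be extremal, with support size $t$. Lemma~\ref{lem:triangular-mass} gives
\[
1=\sum_{j=1}^t j(g_j-1)+\sum_{i=1}^t (m_i-1)L_i .
\]
Every summand is a nonnegative integer, so exactly one summand equals $1$ and all the others vanish. There are two possibilities.
\begin{itemize}
\item The nonzero summand is $j(g_j-1)=1$. This forces $j=1$ and $g_1=2$, with all other gaps equal to $1$ and all multiplicities equal to $1$. This gives $\lambda=\delta_t^{\mathrm{top}}$.
\item The nonzero summand is $(m_i-1)L_i=1$. This forces $L_i=1$, hence $i=t$ because all gaps are minimal, and $m_t=2$. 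This gives $\lambda=\delta_t^{\mathrm{bot}}$.
\end{itemize}
In either case $\lambda$ is one of the two listed partitions.

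\emph{Finishing.} The two partitions are distinct for $t\ge 2$, since their largest parts differ ($t+1$ versus $t$). Therefore $m_\Delta(T_t+1)=2$. This classification does not use the degree condition at all: every support-$t$ partition of $T_t+1$ is one of these two, and both attain the bound.

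The only point needing care is confirming that $\rho(T_t+1)=t$, which holds because $T_{t+1}=T_t+t+1>T_t+1$. This is where $t\ge 1$ (indeed $t\ge 2$) is used. The rest is a routine equality-case analysis.
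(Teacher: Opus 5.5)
Your proof is correct and follows the paper's argument: the upper bound comes from the maximal-support principle together with $\beta_t(1)=1$, attainment comes from Proposition~\ref{prop:first-staircase-perturbations}, and the classification goes through Lemma~\ref{lem:triangular-mass}. The one small difference is that you read the two shapes directly off the surplus identity without using the degree condition; this makes explicit the step the paper compresses into ``the only weight-$1$ bonuses are the top-gap and bottom-multiplicity bonuses,'' and it shows that the entire support-$t$ stratum of $T_t+1$ consists of exactly these two partitions.
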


\begin{proof}
By Corollary~\ref{cor:explicit-support-wise-bound} with $r=t$ and $s=1$,
\[
\Delta_n\le t(t-1)+\beta_t(1)=t(t-1)+1.
\]
Proposition~\ref{prop:first-staircase-perturbations} gives equality. To classify extremals, let $\lambda$ be extremal. By Theorem~\ref{thm:max-support-principle}, it has support size $t$, and by Lemma~\ref{lem:triangular-mass} its total surplus over the staircase background is exactly $1$. Since the degree is $t(t-1)+1$, there is exactly one active bonus term. Proposition~\ref{prop:weighted-bonus} then forces that active bonus to have weight $1$. The only weight-$1$ bonuses on the maximal-support stratum are the top-gap bonus and the bottom-multiplicity bonus, so $\lambda$ must be either $\delta_t^{\mathrm{top}}$ or $\delta_t^{\mathrm{bot}}$.
\end{proof}

\begin{theorem}[Exact extremal classification at $T_t+2$]\label{thm:triangular-plus-two}
Let $t\ge 2$, and set
\[
n=T_t+2.
\]
Then
\[
\Delta_n=t(t-1)+2,
\]
and the unique extremal vertex is
\[
\delta_t^{\mathrm{tb}}=(t+1,t-1,t-2,\dots,2,1,1).
\]
In particular,
\[
m_\Delta(T_t+2)=1.
\]
\end{theorem}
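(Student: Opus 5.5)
The proof will follow the same pattern as Theorem~\ref{thm:triangular-plus-one}: an upper bound from the budget, a lower bound from an explicit witness, and a classification that uses the tightness of the weighted inequality. For the upper bound, apply Corollary~\ref{cor:explicit-support-wise-bound} with $r=t$ and $s=2$. Since $w(2)=2\le 2<w(3)=4$, Proposition~\ref{prop:beta-formula} gives $\beta_t(2)=2$. Here $t\ge 2$ guarantees that $2\le 2t$, so two bonus terms are available. Hence $\Delta_n\le t(t-1)+2$ for partitions of support $t$. By Theorem~\ref{thm:max-support-principle}, only support $t=\rho(n)$ matters for extremals; note that $T_t+2<T_{t+1}$ requires $t\ge 2$. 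Proposition~\ref{prop:first-staircase-perturbations} shows $\deg(\delta_t^{\mathrm{tb}})=t(t-1)+2$, so $\Delta_n=t(t-1)+2$.

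For the classification, let $\lambda\in\operatorname{MaxDeg}(n)$. By Theorem~\ref{thm:max-support-principle}, $\lambda$ has support size $t$, so its surplus is $s=n-T_t=2$. Since $\deg(\lambda)=t(t-1)+2$, exactly two bonus terms are active. By Proposition~\ref{prop:weighted-bonus}, their weights sum to at most $2$. Each weight is at least $1$, so both weights equal $1$ and the inequality is tight. The weight-$1$ bonuses are the top-gap bonus ($g_1>1$, weight $j=1$) and the bottom-multiplicity bonus ($m_t>1$, weight $r+1-t=1$). These are the only two copies of weight $1$ in the multiset, so both must be active and no other bonus is.

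Next I reconstruct $\lambda$ using the identity of Lemma~\ref{lem:triangular-mass},
\[
2=\sum_j j(g_j-1)+\sum_i (m_i-1)L_i .
\]
Since $g_1\ge 2$ and $m_t\ge 2$, the two terms $1\cdot(g_1-1)$ and $(m_t-1)L_t$ are each at least $1$. Their sum is at most $2$, which forces $g_1=2$, $m_t=2$, $L_t=1$, and all other contributions zero. Thus $g_j=1$ for $j\ge 2$ and $m_i=1$ for $i<t$. Hence $L_t=1,\,L_{t-1}=2,\dots,L_2=t-1$ and $L_1=L_2+2=t+1$, so $\lambda=(t+1,t-1,\dots,2,1,1)=\delta_t^{\mathrm{tb}}$. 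This gives $m_\Delta(T_t+2)=1$.

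The only delicate step is the edge case $t=2$. There the top level and the bottom level are distinct, since $r=2$, so the two weight-$1$ bonuses still belong to different indices and the argument goes through unchanged. The result is $\delta_2^{\mathrm{tb}}=(3,1,1)$, and one can confirm directly that it is the unique extremal vertex for $n=5$. Beyond this check, the argument is just the tightness analysis of the weighted inequality, which I expect to be routine.
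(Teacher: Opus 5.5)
Your proposal is correct and follows the paper's proof essentially step for step. It bounds the degree via $\beta_t(2)=2$, attains the bound with $\delta_t^{\mathrm{tb}}$, and classifies extremals via the maximal-support principle and the weighted inequality, which forces both active bonuses to have weight $1$. Your explicit reconstruction of $\lambda$ from Lemma~\ref{lem:triangular-mass}, forcing $g_1=2$, $m_t=2$ and $L_t=1$, spells out the final identification $\lambda=\delta_t^{\mathrm{tb}}$ that the paper asserts in one line.
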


\begin{proof}
By Corollary~\ref{cor:explicit-support-wise-bound} with $r=t$ and $s=2$,
\[
\Delta_n\le t(t-1)+\beta_t(2)=t(t-1)+2.
\]
Proposition~\ref{prop:first-staircase-perturbations} shows that this value is attained. If $\lambda$ is extremal, then by Theorem~\ref{thm:max-support-principle} it has support size $t$, and by Lemma~\ref{lem:triangular-mass} the total surplus is $2$. Since the degree is $t(t-1)+2$, there are exactly two active bonus terms. Proposition~\ref{prop:weighted-bonus} therefore forces the sum of their weights to be at most $2$, and because both weights are positive integers, both active bonuses must have weight $1$. The only weight-$1$ bonuses on the maximal-support stratum are the top-gap bonus and the bottom-multiplicity bonus, so necessarily $\lambda=\delta_t^{\mathrm{tb}}$.
\end{proof}

\subsection{Mixed staircase perturbations and exact extremal degree on triangular intervals}

For integers $t\ge 1$, $a,b,c\ge 0$ with
\[
a+b\le t,
\]
and, in addition, with the extra condition that if $a=0$ then $c=0$, define the mixed staircase perturbation
\[
\Lambda_t(a,b;c)
\]
by prescribing its distinct part sizes and multiplicities as follows. For $1\le i\le t$, set
\[
L_i:=t+1-i+\max\{a+1-i,0\}+c\,\mathbf 1_{i=1},
\]
and
\[
m_i:=\begin{cases}1, & 1\le i\le t-b,\\[1mm] 2, & t-b+1\le i\le t.\end{cases}
\]
Then we define
\[
\Lambda_t(a,b;c):=(L_1^{m_1},L_2^{m_2},\dots,L_t^{m_t}).
\]

The condition $a+b\le t$ ensures that the support levels carrying the duplicated bottom parts lie strictly below the support levels modified by the top gap perturbation, while the extra condition $a=0 \Rightarrow c=0$ ensures that the top support remains strictly decreasing even when no top-gap perturbation is present.

\begin{proposition}[Size and degree of mixed staircase perturbations]\label{prop:mixed-perturbations}
Let $a,b,c$ satisfy the above assumptions. Then $\Lambda_t(a,b;c)$ has support size $t$, and
\[
|\Lambda_t(a,b;c)|=T_t+T_a+T_b+c,
\]
where
\[
T_m=\frac{m(m+1)}{2}.
\]
Moreover,
\[
\deg(\Lambda_t(a,b;c))=t(t-1)+a+b.
\]
\end{proposition}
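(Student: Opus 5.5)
The plan is to compute the gap and multiplicity data of $\Lambda_t(a,b;c)$ directly from the definition. Then the size follows from Lemma~\ref{lem:triangular-mass} and the degree from the second form of Proposition~\ref{prop:degree-formula}.

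\emph{Support.} First I check that the $L_i$ are strictly decreasing and positive. Then the block form is legitimate and the support size is $t$. Write $L_i=(t+1-i)+e_i$ with $e_i=\max\{a+1-i,0\}+c\,\mathbf 1_{i=1}$. The gaps are
\[
g_i=L_i-L_{i+1}=1+e_i-e_{i+1}\qquad(1\le i\le t),
\]
with $e_{t+1}:=0$. I then compute the gaps in three ranges:
\begin{itemize}
\item for $2\le i\le a$, the increment $e_i-e_{i+1}=1$, so $g_i=2$;
\item for $i=1$ and $a\ge 1$, one gets $g_1=2+c$;
\item for $i>a$, one gets $e_i=e_{i+1}=0$, so $g_i=1$.
\end{itemize}
The one subtle case is $a=0$. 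There $g_1=1+c$, and this is exactly why the hypothesis $a=0\Rightarrow c=0$ is imposed: it gives $g_1=1$. I also need $a\le t$, which follows from $a+b\le t$, so that the index range is meaningful. In all cases every $g_i\ge1$, so the $L_i$ are strictly decreasing, and $L_t\ge1$. The number of gaps with $g_i>1$ is therefore exactly $a$, and the gap surplus is
\[
\sum_j j(g_j-1)=\sum_{j=1}^a j+c=T_a+c.
\]
The extra $c$ sits at index $j=1$, which carries weight $1$.

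\emph{Multiplicities.} Next I use the hypothesis $a+b\le t$. The doubled levels $i\ge t-b+1$ satisfy $i>a$, so on these levels $e_i=0$ and $L_i=t+1-i$. For such $i$ this gives $(m_i-1)L_i=t+1-i$, and summing over the $b$ bottom levels gives $1+2+\cdots+b=T_b$. Exactly $b$ multiplicities exceed $1$.

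\emph{Conclusion.} Lemma~\ref{lem:triangular-mass} then yields $|\Lambda_t(a,b;c)|=T_t+T_a+c+T_b$. Proposition~\ref{prop:degree-formula} yields $\deg=t(t-1)+b+a$. The only step needing care is the boundary bookkeeping in the gap computation: the cases $i=1$ versus $i=a$, the case $a=0$, and the case $a=t$, where $e_{t+1}=0$ still gives $g_t=2$ when $t\le a$. I expect no real obstacle beyond this case check.
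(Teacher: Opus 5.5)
Your proof is correct and takes essentially the same route as the paper. You use the same gap computation, including the role of $a=0\Rightarrow c=0$ and of $a+b\le t$ in placing the doubled levels on the unmodified staircase. You also count the active bonuses the same way and get the same $T_b$ contribution from the doubled levels. The only cosmetic difference is that you get the size from Lemma~\ref{lem:triangular-mass} via the gap surplus $T_a+c$, while the paper sums the distinct parts $\sum_i L_i=T_t+T_a+c$ directly; these amount to the same computation.
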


\begin{proof}
By construction, the sequence $L_1>\cdots>L_t>0$ is strictly decreasing, so $\Lambda_t(a,b;c)$ has support size $t$. Indeed, if $a\ge 1$, then
\[
L_1-L_2=2+c,\qquad L_i-L_{i+1}=2\quad (2\le i\le a),\qquad L_i-L_{i+1}=1\quad (i>a),
\]
while if $a=0$, then necessarily $c=0$ and we recover the staircase support.

Summing the distinct part sizes gives
\[
\sum_{i=1}^t L_i=\sum_{i=1}^t (t+1-i)+\sum_{i=1}^a (a+1-i)+c=T_t+T_a+c.
\]
Next, since $a+b\le t$, the duplicated support levels are indexed by $i=t-b+1,\dots,t$, which lie below the modified top-gap region. Hence on these levels one has
\[
L_{t-b+1}=b,\quad L_{t-b+2}=b-1,\quad \dots,\quad L_t=1,
\]
so the extra contribution coming from the duplicated bottom parts is
\[
\sum_{i=t-b+1}^t L_i=1+2+\cdots+b=T_b.
\]
Therefore
\[
|\Lambda_t(a,b;c)|=\sum_{i=1}^t m_iL_i=\sum_{i=1}^t L_i+\sum_{i=t-b+1}^t L_i=T_t+T_a+c+T_b.
\]

Finally, the active gap bonuses are exactly the first $a$ ones, since
\[
g_i=L_i-L_{i+1}>1 \iff 1\le i\le a,
\]
and the active multiplicity bonuses are exactly the bottom $b$ ones, since
\[
m_i>1 \iff t-b+1\le i\le t.
\]
Applying Proposition~\ref{prop:degree-formula} yields
\[
\deg(\Lambda_t(a,b;c))=t(t-1)+a+b.
\]
This proves the claim.
\end{proof}

\begin{theorem}[Exact formula for the maximal degree on a triangular interval]\label{thm:exact-max-degree-triangular-interval}
Let
\[
n=T_t+\nu,\qquad 0\le \nu\le t.
\]
Then
\[
\Delta_n=t(t-1)+\beta_t(\nu).
\]

Equivalently, if $q=\lfloor \sqrt{\nu}\rfloor$, then
\[
\Delta_{T_t+\nu}=\begin{cases}t(t-1), & \nu=0,\\[1mm] t(t-1)+2q-1, & q^2\le \nu<q(q+1),\\[1mm] t(t-1)+2q, & q(q+1)\le \nu<(q+1)^2.\end{cases}
\]
\end{theorem}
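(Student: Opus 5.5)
The plan is to prove matching upper and lower bounds: the upper bound comes from the maximal-support principle together with the budget function, and the lower bound from an explicit mixed staircase perturbation $\Lambda_t(a,b;c)$ that realizes $\beta_t(\nu)$ bonus terms exactly.

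\emph{Upper bound.} Since $0\le\nu\le t$, we have $T_t\le n<T_{t+1}$, so $\rho(n)=t$. By Theorem~\ref{thm:max-support-principle}, every $\lambda\in\operatorname{MaxDeg}(n)$ has support size $t$. Applying Corollary~\ref{cor:explicit-support-wise-bound} with $r=t$ and $s=n-T_t=\nu$ to such a $\lambda$ gives $\Delta_n\le t(t-1)+\beta_t(\nu)$. The second, piecewise form of the statement then follows from Proposition~\ref{prop:beta-formula}, using $\nu\le t<t(t+1)$. One edge case needs a check: the branch $t^2\le\nu<t(t+1)$ can only occur when $t=1$ and $\nu=1$. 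There $\beta_1(1)=1=2q-1$ with $q=1$, which agrees with the square--pronic expression, so the two forms coincide for all $0\le\nu\le t$.

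\emph{Lower bound.} Let $k:=\beta_t(\nu)$, and set $a:=\lceil k/2\rceil$, $b:=\lfloor k/2\rfloor$. A direct computation gives $T_a+T_b=w(k)$:
\begin{itemize}
\item for $k=2q-1$ one has $T_q+T_{q-1}=q^2$;
\item for $k=2q$ one has $2T_q=q(q+1)$.
\end{itemize}
Put $c:=\nu-w(k)$, which is nonnegative because $w(k)\le\nu$ by Proposition~\ref{prop:beta-formula}. Proposition~\ref{prop:mixed-perturbations} then says that $\Lambda_t(a,b;c)$ is a partition of $T_t+w(k)+c=n$ with degree $t(t-1)+a+b=t(t-1)+k$, which matches the upper bound.

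\emph{Admissibility of $(a,b,c)$ — the main obstacle.} The remaining work, and the only delicate point, is to verify the hypotheses of the construction: $a+b\le t$, and $a=0\Rightarrow c=0$.
\begin{itemize}
\item If $a=0$, then $k=0$. This forces $\nu<w(1)=1$, so $\nu=0$ and hence $c=0$.
\item For $a+b=k\le t$, use the bound $\nu\le t$. If $k=2q-1$ with $q\ge1$, then $w(k)=q^2\le\nu\le t$ and $2q-1\le q^2$. If $k=2q$, then $w(k)=q(q+1)\le\nu\le t$ and $2q\le q(q+1)$. In both cases $k\le w(k)\le t$.
\end{itemize}
This is exactly where the restriction $\nu\le t$ of the statement is used. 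It guarantees that the duplicated bottom levels and the widened top gaps do not collide, as required in the definition of $\Lambda_t(a,b;c)$. With these checks the two bounds coincide, which proves the theorem.
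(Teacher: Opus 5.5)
Your proposal is correct and follows essentially the same route as the paper. The upper bound comes from the maximal-support principle combined with the budget bound, and the lower bound comes from $\Lambda_t(a,b;c)$ with $a=\lceil k/2\rceil$, $b=\lfloor k/2\rfloor$, $c=\nu-w(k)$. These are exactly the paper's choices $(a,b)=(q,q-1)$ and $(q,q)$, and your admissibility check $a+b\le w(k)\le \nu\le t$ is the paper's. The only differences are cosmetic: you absorb $\nu=0$ into the construction, via $\Lambda_t(0,0;0)=\delta_t$, instead of citing the triangular-number theorem, and you explicitly verify the $t=1$ edge case of the piecewise form, which the paper leaves implicit.
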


\begin{proof}
The upper bound
\[
\Delta_n\le t(t-1)+\beta_t(\nu)
\]
follows from Theorem~\ref{thm:max-support-principle} and Corollary~\ref{cor:explicit-support-wise-bound}.

It remains to prove the reverse inequality by explicit construction. If $\nu=0$, this is Theorem~\ref{thm:triangular-extremal}. Assume therefore that $\nu\ge 1$, and let $q=\lfloor \sqrt{\nu}\rfloor$. In both constructions below, the parameter $c$ is nonnegative, and the extra condition $a=0 \Rightarrow c=0$ is automatic because $a=q$ and $q\ge 1$ when $\nu\ge 1$.

If
\[
q^2\le \nu<q(q+1),
\]
set
\[
a=q,\qquad b=q-1,\qquad c=\nu-q^2.
\]
Then
\[
T_a+T_b+c=T_q+T_{q-1}+\nu-q^2=\nu.
\]
Moreover,
\[
a+b=2q-1\le q^2\le \nu\le t,
\]
so Proposition~\ref{prop:mixed-perturbations} applies and gives a partition of size $T_t+\nu$ with degree
\[
t(t-1)+a+b=t(t-1)+2q-1=t(t-1)+\beta_t(\nu).
\]

If
\[
q(q+1)\le \nu<(q+1)^2,
\]
set
\[
a=q,\qquad b=q,\qquad c=\nu-q(q+1).
\]
Then
\[
T_a+T_b+c=T_q+T_q+\nu-q(q+1)=\nu,
\]
and
\[
a+b=2q\le q(q+1)\le \nu\le t.
\]
Again Proposition~\ref{prop:mixed-perturbations} applies and gives a partition of size $T_t+\nu$ with degree
\[
t(t-1)+a+b=t(t-1)+2q=t(t-1)+\beta_t(\nu).
\]

Thus the upper bound is attained in all cases, and hence
\[
\Delta_n=t(t-1)+\beta_t(\nu).
\]
This proves the theorem.
\end{proof}

\begin{corollary}\label{cor:exact-max-degree}
Let
\[
\rho(n)=\max\{r:T_r\le n\},\qquad \nu=n-T_{\rho(n)}.
\]
Then
\[
\Delta_n=\rho(n)(\rho(n)-1)+\beta_{\rho(n)}(\nu).
\]
\end{corollary}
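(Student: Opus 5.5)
This corollary is a re-indexing of Theorem~\ref{thm:exact-max-degree-triangular-interval}. The plan is to check that the pair $(\rho(n),\nu)$ satisfies that theorem's hypotheses and then substitute.

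Set $t:=\rho(n)$. By the definition of $\rho(n)$ as the largest $r$ with $T_r\le n$, we have $T_t\le n<T_{t+1}$. Hence
\[
\nu=n-T_t\ge 0,\qquad \nu<T_{t+1}-T_t=t+1,
\]
so $0\le\nu\le t$. Since $n\ge 1$ gives $T_1=1\le n$, we also have $t\ge 1$. Thus $n=T_t+\nu$ has exactly the form required by Theorem~\ref{thm:exact-max-degree-triangular-interval}. That theorem gives $\Delta_n=t(t-1)+\beta_t(\nu)$, and substituting $t=\rho(n)$ yields the stated formula.

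The only point that needs care is that the decomposition $n=T_t+\nu$ with $0\le\nu\le t$ is unique, so nothing is lost in identifying $t$ with $\rho(n)$. If $n=T_{t'}+\nu'$ with $0\le\nu'\le t'$, then $T_{t'}\le n\le T_{t'}+t'<T_{t'+1}$, which forces $t'=\rho(n)$ and $\nu'=\nu$.

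A smaller check concerns the proof of Theorem~\ref{thm:exact-max-degree-triangular-interval} for small $t$, where its construction step uses $t\ge 2$ implicitly through Proposition~\ref{prop:mixed-perturbations}. For $t=1$, i.e.\ $n\in\{1,2\}$, I would verify directly that $\Delta_1=0=\beta_1(0)$ and $\Delta_2=1=0+\beta_1(1)$: the unique partition of $1$ has degree $0$, and both partitions of $2$ have degree $1$ by Proposition~\ref{prop:min-degree}.
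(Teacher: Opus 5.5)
Your proposal is correct and matches the paper. The paper treats this corollary as an immediate restatement of Theorem~\ref{thm:exact-max-degree-triangular-interval} with $t=\rho(n)$, where $0\le\nu\le t$ follows from $T_t\le n<T_{t+1}$. Your separate small-$t$ check is correct but not needed, since Proposition~\ref{prop:mixed-perturbations} is stated for all $t\ge 1$ (for instance, $\Lambda_1(1,0;0)=(2)$ handles $n=2$); the uniqueness remark is likewise harmless but unnecessary.
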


\subsection{Consequences of the exact maximal-degree formula}

We now record several direct consequences of the exact formula
\[
\Delta_{T_t+\nu}=t(t-1)+\beta_t(\nu)
\]
on each triangular interval.

\begin{proposition}[Monotonicity pattern of the maximal degree sequence]\label{prop:monotonicity-pattern}
The sequence $(\Delta_n)_{n\ge 1}$ is nondecreasing.

Moreover:
\begin{enumerate}[label=(\roman*)]
\item if $T_t\le n<n+1<T_{t+1}$, then
\[
\Delta_{n+1}-\Delta_n\in\{0,1\};
\]
\item if $n=T_{t+1}-1$, then
\[
\Delta_{n+1}>\Delta_n.
\]
\end{enumerate}
\end{proposition}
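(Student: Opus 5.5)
The plan is to read everything off the exact formula of Theorem~\ref{thm:exact-max-degree-triangular-interval}, namely $\Delta_{T_t+\nu}=t(t-1)+\beta_t(\nu)$ for $0\le\nu\le t$. We first treat consecutive values inside one triangular interval, then the passage across the endpoint $T_{t+1}$. Nondecreasingness of the whole sequence then follows by combining (i) and (ii), since every step $n\to n+1$ is of exactly one of these two kinds. The small values $n=1,2$, where $t=1$, are checked directly ($\Delta_1=0$, $\Delta_2=1$).

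For (i), write $n=T_t+\nu$ with $0\le\nu<\nu+1\le t$, so that
\[
\Delta_{n+1}-\Delta_n=\beta_t(\nu+1)-\beta_t(\nu).
\]
By Proposition~\ref{prop:beta-formula}, $\beta_t(s)=\max\{k\le 2t: w(k)\le s\}$, so $\beta_t$ is nondecreasing in $s$. It therefore suffices to show that $\beta_t$ never jumps by $2$ or more when $s$ increases by $1$. A jump of size at least $2$ at $s+1$ would require $w(k+1)\le s+1$ with $w(k)>s$, where $k=\beta_t(s)+1$. This forces $w(k+1)\le w(k)$, which contradicts the fact that $w$ is strictly increasing: the consecutive values are $0,1,2,4,6,9,12,\dots$, with increments $w(k+1)-w(k)=\lceil (k+1)/2\rceil\ge1$. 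Hence the difference lies in $\{0,1\}$.

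For (ii), let $n=T_{t+1}-1=T_t+t$, so that
\[
\Delta_n=t(t-1)+\beta_t(t),\qquad \Delta_{n+1}=(t+1)t
\]
by Theorem~\ref{thm:triangular-extremal}. We need $\beta_t(t)<2t$. The full multiset has weight $t(t+1)>t$ for $t\ge1$, so $\beta_t(t)\le 2t-1$. This gives
\[
\Delta_n\le t(t-1)+2t-1=t(t+1)-1<\Delta_{n+1}.
\]
In fact $\beta_t(t)$ is far smaller, about $2\sqrt t$, but the crude bound suffices.

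The only point needing care is the strict increase of $w$, including at the parity switch: $w(2q-1)=q^2<q(q+1)=w(2q)<(q+1)^2=w(2q+1)$. This is immediate, so no genuine obstacle is expected. The remaining check is bookkeeping: we must confirm that each $n\ge1$ is uniquely written as $T_t+\nu$ with $0\le\nu\le t$, so that cases (i) and (ii) cover all steps.
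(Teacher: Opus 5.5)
Your proof is correct and takes the same route as the paper: (i) is read off from $\Delta_{T_t+\nu}=t(t-1)+\beta_t(\nu)$, and (ii) follows from $\Delta_{T_{t+1}}=t(t+1)$ together with $\beta_t(t)<2t$. Your version is slightly more complete on two points. The paper attributes (i) only to the monotonicity of $\beta_t$, which gives nonnegativity of the difference but not the bound $\Delta_{n+1}-\Delta_n\le 1$; your strict-increase argument for $w$ supplies that bound. The paper also states $\beta_t(t)<2t$ without justification, and your total-weight argument supplies it.
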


\begin{proof}
Inside a fixed triangular interval, the claim follows from
\[
\Delta_{T_t+\nu}=t(t-1)+\beta_t(\nu)
\]
and the monotonicity of $\beta_t(\nu)$ in $\nu$.

At the boundary,
\[
\Delta_{T_{t+1}}=(t+1)t,\qquad \Delta_{T_{t+1}-1}=t(t-1)+\beta_t(t),
\]
and since $\beta_t(t)<2t$, one gets
\[
\Delta_{T_{t+1}-1}<t(t+1)=\Delta_{T_{t+1}}.
\]
Thus $(\Delta_n)$ is nondecreasing, with a strict increase at every transition to a new triangular interval.
\end{proof}

\begin{proposition}[Increment rule inside a triangular interval]\label{prop:increment-rule}
Let
\[
T_t\le n<n+1<T_{t+1},\qquad n=T_t+\nu,\qquad 0\le \nu<t.
\]
Then
\[
\Delta_{n+1}-\Delta_n\in\{0,1\}.
\]
More precisely,
\[
\Delta_{n+1}-\Delta_n=1
\]
if and only if $\nu+1$ is either a square or a pronic number, that is,
\[
\nu+1=q^2\quad\text{or}\quad \nu+1=q(q+1)
\]
for some integer $q\ge 1$. Otherwise,
\[
\Delta_{n+1}-\Delta_n=0.
\]
\end{proposition}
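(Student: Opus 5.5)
The plan is to reduce everything to the exact formula of Theorem~\ref{thm:exact-max-degree-triangular-interval}. Since $n=T_t+\nu$ and $n+1=T_t+(\nu+1)$ with $0\le\nu<\nu+1\le t$, both values lie in the same triangular interval. Hence
\[
\Delta_{n+1}-\Delta_n=\beta_t(\nu+1)-\beta_t(\nu).
\]
The task is therefore purely about consecutive increments of the budget function $\beta_t$ on the range $0\le s\le t$.

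First I would check that on this range we are in the non-saturated regime of Proposition~\ref{prop:beta-formula}, so that $\beta_t(s)=\max\{k\ge 0: w(k)\le s\}$ with no truncation at $2t$. Indeed, $w(2t)=t(t+1)>t\ge s$ for $t\ge 1$, so the maximizing $k$ is automatically strictly below $2t$. One can also simply use the characterization $w(k)\le s<w(k+1)$ with the convention $w(2t+1)=+\infty$.

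Next I would note that $w$ is strictly increasing in $k$, since $w(k+1)-w(k)\ge 1$ (the added element has weight at least $1$). Then $\beta_t(s)$ counts the number of $k\ge 1$ with $w(k)\le s$. Consequently $\beta_t(s+1)-\beta_t(s)$ equals the number of $k\ge 1$ with $w(k)=s+1$, and by strict monotonicity this number is either $0$ or $1$. This already gives $\Delta_{n+1}-\Delta_n\in\{0,1\}$, consistent with Proposition~\ref{prop:monotonicity-pattern}(i). The increment equals $1$ exactly when $s+1=\nu+1$ lies in the image $\{w(k):k\ge 1\}=\{q^2:q\ge1\}\cup\{q(q+1):q\ge1\}$. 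We must also make sure that the relevant $k$ satisfies $k\le 2t$. If $w(k)=\nu+1\le t<w(2t)$, then $k<2t$, so this holds.

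The only point needing care, rather than a genuine obstacle, is this bookkeeping. One must confirm that the square and pronic values are exactly the values of $w$ at odd and even $k$ respectively, directly from the definition of $w$. One must also confirm that no cap from $r=t$ interferes on the interval $\nu+1\le t$, which the inequality $t<t(t+1)$ handles. Alternatively, I would cross-check the result against the explicit piecewise formula in Theorem~\ref{thm:exact-max-degree-triangular-interval}. There $\Delta$ jumps precisely when $\nu$ crosses a threshold $q^2$ (value going from $2q-2$ to $2q-1$) or $q(q+1)$ (going from $2q-1$ to $2q$), and is constant otherwise.
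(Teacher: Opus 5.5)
Your proposal is correct and takes essentially the same route as the paper: you reduce the increment to $\beta_t(\nu+1)-\beta_t(\nu)$ via the exact formula on the triangular interval, then read off the jumps from the square--pronic values of $w$. Your version is more careful than the paper's one-line citation of the threshold rule. You explicitly check that $w$ is strictly increasing, and you check that the cap $k\le 2t$ never binds when $\nu+1\le t<t(t+1)=w(2t)$.
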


\begin{proof}
Since
\[
\Delta_{T_t+\nu}=t(t-1)+\beta_t(\nu),
\]
one has
\[
\Delta_{n+1}-\Delta_n=\beta_t(\nu+1)-\beta_t(\nu).
\]
By Proposition~\ref{prop:beta-formula}, the function $\beta_t(\nu)$ increases by one exactly when the budget crosses one of the threshold values
\[
1,2,4,6,9,12,\dots,
\]
namely at squares and pronic numbers.
\end{proof}

\begin{corollary}\label{cor:jump-thresholds}
On every triangular interval $[T_t,T_{t+1})$, the graph of $\Delta_n$ is a staircase function with upward jumps precisely at the values
\[
n=T_t+q^2\qquad\text{and}\qquad n=T_t+q(q+1),
\]
whenever these values lie in the interval.
\end{corollary}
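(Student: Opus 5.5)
The plan is to deduce Corollary~\ref{cor:jump-thresholds} directly from Theorem~\ref{thm:exact-max-degree-triangular-interval} together with Proposition~\ref{prop:increment-rule}. On the interval $[T_t,T_{t+1})$, i.e.\ for $n=T_t+\nu$ with $0\le\nu\le t$, Theorem~\ref{thm:exact-max-degree-triangular-interval} gives $\Delta_n=t(t-1)+\beta_t(\nu)$. The shape of the graph of $\Delta_n$ there is therefore exactly the shape of the function $\nu\mapsto\beta_t(\nu)$ on $\{0,1,\dots,t\}$, shifted by the constant $t(t-1)$.

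First, I will note that $\Delta_n$ is a staircase function on this interval. By Proposition~\ref{prop:increment-rule}, every increment $\Delta_{n+1}-\Delta_n$ with $n,n+1$ in the interval lies in $\{0,1\}$. So the graph is nondecreasing, with unit jumps only.

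Second, I will locate the jumps. Proposition~\ref{prop:increment-rule} says that an increment from $n=T_t+\nu$ to $n+1$ equals $1$ exactly when $\nu+1$ is a square $q^2$ or a pronic number $q(q+1)$ with $q\ge1$. Writing $n+1=T_t+(\nu+1)$, this means the upward jump occurs precisely at $n+1=T_t+q^2$ or $n+1=T_t+q(q+1)$. The qualifier ``whenever these values lie in the interval'' corresponds to the constraint $1\le \nu+1\le t$. This is exactly the range covered by Proposition~\ref{prop:increment-rule}, whose hypothesis is $0\le\nu<t$.

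Third, I will check consistency with the explicit thresholds. Since $\nu\le t$ is always below $t(t+1)$, we are in the square--pronic regime of Proposition~\ref{prop:beta-formula}. The thresholds are $w(k)$, namely $1,2,4,6,9,\dots$, and $w(k)\le t$ forces $\lceil k/2\rceil\le t$, so the truncation $k\le 2t$ never intervenes. This confirms that no additional or missing jumps arise inside the interval.

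The only point needing care is the endpoint bookkeeping. The jump at the left endpoint $T_t$ itself is a transition between intervals, handled by Proposition~\ref{prop:monotonicity-pattern}(ii), and is not counted among the listed values. The value $q^2=1$ gives the first interior jump at $T_t+1$. Beyond this, I expect no real obstacle.
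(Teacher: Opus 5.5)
Your proposal is correct and takes the paper's route: the paper gives no separate proof and treats this corollary as an immediate restatement of Proposition~\ref{prop:increment-rule}, read through the exact formula of Theorem~\ref{thm:exact-max-degree-triangular-interval}. Your extra check adds a small detail that the paper's proof of Proposition~\ref{prop:increment-rule} leaves implicit: $w(k)\le \nu\le t$ forces $k\le 2t$, so the cap at $2t$ in the definition of $\beta_t$ never creates or removes a jump inside the interval.
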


\begin{proposition}[Square-root bounds for the surplus correction]\label{prop:surplus-correction-bounds}
Let
\[
T_t\le n<T_{t+1},\qquad n=T_t+\nu,\qquad 0\le \nu\le t.
\]
Then
\[
2\lfloor\sqrt{\nu}\rfloor-1\le \beta_t(\nu)\le 2\lfloor\sqrt{\nu}\rfloor,
\]
with the convention that the left-hand side is $0$ for $\nu=0$. Consequently,
\[
t(t-1)+2\lfloor\sqrt{\nu}\rfloor-1\le \Delta_n\le t(t-1)+2\lfloor\sqrt{\nu}\rfloor.
\]
\end{proposition}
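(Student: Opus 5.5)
The plan is to read the bounds directly off the explicit square--pronic description of $\beta_t$ in Proposition~\ref{prop:beta-formula}, and then add $t(t-1)$ using Theorem~\ref{thm:exact-max-degree-triangular-interval}.

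\textbf{The case $\nu=0$.} Here $\beta_t(0)=0$, since $w(1)=1>0$. With the stated convention the left side is $0$, and the right side is $2\lfloor\sqrt 0\rfloor=0$, so both inequalities hold with equality.

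\textbf{The case $1\le \nu\le t$.} Set $q=\lfloor\sqrt\nu\rfloor\ge 1$, so that $q^2\le \nu<(q+1)^2$. First I check which branch of Proposition~\ref{prop:beta-formula} applies.

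\begin{itemize}
\item If $t\ge 2$, then $\nu\le t<t^2\le t(t+1)$, so $\nu<t(t+1)$. Moreover $q^2\le \nu\le t$ gives $q\le\sqrt t<t$. Hence the generic square--pronic branch applies (it requires $q<r$ with $r=t$).
\item If $t=1$, then $\nu=1$ and $q=1=t$. This falls into the boundary branch $r^2\le s<r(r+1)$, which gives $\beta_1(1)=1=2q-1$. That value lies within the claimed bounds.
\end{itemize}

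In the generic branch there are two subcases.
\begin{itemize}
\item If $q^2\le\nu<q(q+1)$, then $\beta_t(\nu)=2q-1$.
\item If $q(q+1)\le\nu<(q+1)^2$, then $\beta_t(\nu)=2q$.
\end{itemize}
In either subcase $2q-1\le\beta_t(\nu)\le 2q$, which is the first claim.

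\textbf{The bound on $\Delta_n$.} Substitute these bounds into $\Delta_n=t(t-1)+\beta_t(\nu)$ from Theorem~\ref{thm:exact-max-degree-triangular-interval}. This gives the displayed two-sided bound on $\Delta_n$.

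\textbf{A fallback that avoids the branch checks.} If checking the branch conditions becomes fussy, I would instead argue directly from the characterization $\beta_t(\nu)=\max\{k\le 2t: w(k)\le\nu\}$. Since $w$ is increasing, it suffices to verify two things.
\begin{itemize}
\item $w(2q-1)=q^2\le\nu$ and $2q-1\le 2t$. This gives $\beta_t(\nu)\ge 2q-1$.
\item $w(2q+1)=(q+1)^2>\nu$. This gives $\beta_t(\nu)\le 2q$.
\end{itemize}
This route also handles $t=1$ uniformly, because the cap $2t$ only helps the upper bound.

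The only delicate point is this bookkeeping about which branch of Proposition~\ref{prop:beta-formula} applies; the rest is routine.
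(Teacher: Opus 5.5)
Your proposal is correct and follows essentially the same route as the paper: treat $\nu=0$ separately, read off $\beta_t(\nu)\in\{2q-1,2q\}$ from the square--pronic branch when $q<t$, observe that the remaining case $q=t$ forces $t=\nu=1$ with $\beta_1(1)=1$, and then add $t(t-1)$. You split on $t\ge 2$ versus $t=1$ instead of $q<t$ versus $q=t$, which is equivalent. Your fallback, which uses the monotonicity of $w$ to get $w(2q-1)\le\nu<w(2q+1)$ together with $2q-1\le 2t$ (the latter because $q\le\sqrt{t}\le t$, a step you should state explicitly), is a valid and cleaner uniform alternative that avoids the branch bookkeeping.
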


\begin{proof}
If $\nu=0$, then $\beta_t(0)=0$ by definition, so the claim is immediate.

Assume now that $\nu\ge 1$, and set
\[
q:=\lfloor \sqrt{\nu}\rfloor.
\]
Then
\[
q^2\le \nu<(q+1)^2.
\]
If $q<t$, Proposition~\ref{prop:beta-formula} gives exactly two possibilities:
\[
\beta_t(\nu)=2q-1\quad\text{when}\quad q^2\le \nu<q(q+1),
\]
and
\[
\beta_t(\nu)=2q\quad\text{when}\quad q(q+1)\le \nu<(q+1)^2.
\]
Hence in all cases
\[
2q-1\le \beta_t(\nu)\le 2q.
\]
The only remaining case is $q=t$. Since $\nu\le t$ and $q^2\le \nu$, this forces
\[
t^2\le t,
\]
so necessarily $t=1$ and $\nu=1$. Then directly
\[
\beta_1(1)=1,
\]
which again satisfies
\[
2q-1\le \beta_t(\nu)\le 2q.
\]
Since $q=\lfloor\sqrt{\nu}\rfloor$, this proves the stated bound for $\beta_t(\nu)$. The corresponding estimate for $\Delta_n$ follows from Corollary~\ref{cor:exact-max-degree}.
\end{proof}

\begin{corollary}[Global asymptotic form of the maximal degree]\label{cor:global-asymptotic}
Let
\[
T_t\le n<T_{t+1},\qquad n=T_t+\nu,\qquad 0\le \nu\le t.
\]
Then
\[
2t\le 2n-\Delta_n\le 4t.
\]
Consequently,
\[
\Delta_n=2n-\Theta(\sqrt n)\qquad (n\to\infty).
\]
\end{corollary}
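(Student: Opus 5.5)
We need to prove $2t\le 2n-\Delta_n\le 4t$ on the interval $n=T_t+\nu$, $0\le\nu\le t$, and then deduce $\Delta_n=2n-\Theta(\sqrt n)$. The plan is to substitute the exact formula and reduce everything to the surplus correction. By Theorem~\ref{thm:exact-max-degree-triangular-interval},
\[
2n-\Delta_n = t(t+1)+2\nu - t(t-1)-\beta_t(\nu) = 2t+2\nu-\beta_t(\nu).
\]
So it suffices to show
\[
0\le 2\nu-\beta_t(\nu)\le 2t.
\]

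For the lower bound, we need $\beta_t(\nu)\le 2\nu$. This holds since each activated bonus has weight at least $1$, so $\beta_t(\nu)\le\nu$. Alternatively, use Proposition~\ref{prop:surplus-correction-bounds}: $\beta_t(\nu)\le 2\lfloor\sqrt\nu\rfloor\le 2\nu$ for $\nu\ge1$, and $\beta_t(0)=0$.

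For the upper bound, we need $2\nu-\beta_t(\nu)\le 2t$. Since $\beta_t(\nu)\ge 0$ and $\nu\le t$, this is immediate. Both bounds are thus one-line consequences, and there is no real obstacle here.

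For the asymptotic statement, write $\rho=\rho(n)=t$. From $T_t\le n<T_{t+1}$ we get $t^2/2\le n<(t+1)^2/2$. Hence $\sqrt{2n}-1<t\le\sqrt{2n}$, so $t=\Theta(\sqrt n)$ as $n\to\infty$ (with $t\ge1$ for $n\ge1$). Combining this with $2t\le 2n-\Delta_n\le 4t$ gives $2n-\Delta_n=\Theta(\sqrt n)$, with explicit constants roughly $2\sqrt{2n}-2$ and $4\sqrt{2n}$.

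The only point requiring a little care is the squeeze on $t$ in terms of $n$, in particular making sure the lower constant stays positive for large $n$. This follows directly from the strict inequality $n<T_{t+1}$.
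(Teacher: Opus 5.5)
Your proof of the two-sided bound is the paper's argument. You substitute the exact formula to get $2n-\Delta_n=2t+2\nu-\beta_t(\nu)$, then use $0\le\beta_t(\nu)\le 2\nu$ together with $\nu\le t$. Your justification that $\beta_t(\nu)\le\nu$, because every weight in the multiset is at least $1$, is correct and slightly sharper than what the paper states.

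The one step that fails as written is in the asymptotic squeeze, which is exactly the point you flagged as needing care. The inequality $n<T_{t+1}$ gives $2n<(t+1)(t+2)$, not $2n<(t+1)^2$, and the latter is false in general. For instance, $n=5$, $t=2$ gives $2n=10>9$, so $t>\sqrt{2n}-1$ fails there. More generally, $t>\sqrt{2n}-1$ fails precisely when $\nu\ge (t+1)/2$, i.e. on the upper half of every triangular interval.

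The fix is immediate. Since $(t+1)(t+2)<(t+2)^2$, you get $\sqrt{2n}-2<t\le\sqrt{2n}$, hence $2\sqrt{2n}-4<2n-\Delta_n\le 4\sqrt{2n}$. This still yields $\Delta_n=2n-\Theta(\sqrt n)$; the paper simply invokes $t\sim\sqrt{2n}$ at this point.
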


\begin{proof}
Using
\[
n=\frac{t(t+1)}{2}+\nu,\qquad \Delta_n=t(t-1)+\beta_t(\nu),
\]
we obtain
\[
2n-\Delta_n=t(t+1)+2\nu-t(t-1)-\beta_t(\nu)=2t+2\nu-\beta_t(\nu).
\]
Since $0\le \nu\le t$ and $0\le \beta_t(\nu)\le 2\nu$, it follows that
\[
2t\le 2n-\Delta_n\le 2t+2\nu\le 4t.
\]
Because $t\sim \sqrt{2n}$, the asymptotic form follows.
\end{proof}

\section{Computational profile of the degree landscape}
\label{sec:computational}

In this section we specify the computational scope of the paper and isolate the numerical outputs most directly aligned with the theorem part. The exact formula for the maximal degree has already been established in Section~\ref{sec:max-degree}, so the role of the present section is not to supply further proofs, but to record a compact descriptive profile of the degree landscape on a finite range.

\subsection{Computational setup}

The theoretical results of Section~\ref{sec:max-degree} reduce the extremal degree problem to a much smaller search space than the full vertex set of $G_n$, allowing a computational study guided by the structure of Section~\ref{sec:max-degree} rather than by explicit construction of the full adjacency relation.

For a partition
\[
\lambda=(L_1^{m_1},L_2^{m_2},\dots,L_r^{m_r})\vdash n,\qquad L_1>\cdots>L_r>0,
\]
the degree is computed directly from Proposition~\ref{prop:degree-formula}:
\[
\deg(\lambda)=r(r-1)+\sum_{i=1}^r \mathbf 1_{m_i>1}+\sum_{i=1}^r \mathbf 1_{g_i>1},\qquad g_i=L_i-L_{i+1},\quad L_{r+1}=0.
\]
Accordingly, the computational part of the paper does not require explicit construction of all edges of $G_n$. Instead, for each partition $\lambda\vdash n$, we record its support size, gap pattern, multiplicity pattern, conjugacy type, and degree.

This approach is especially efficient in the extremal regime. If
\[
n=T_t+\nu,\qquad 0\le \nu\le t,
\]
then Theorem~\ref{thm:max-support-principle} implies that every extremal vertex has support size $t=\rho(n)$. Thus extremal search may be restricted to the maximal-support stratum. However, this stratum is not parametrized by a single weak partition of $\nu$: one must track both gap surplus and multiplicity surplus. The correct description uses both gap excesses and multiplicity excesses.

\begin{lemma}[Encoding of the maximal-support stratum]\label{lem:max-support-encoding}
Let
\[
n=T_t+\nu,\qquad 0\le \nu\le t.
\]
A partition $\lambda\vdash n$ has support size $t$ if and only if it can be written uniquely in the form
\[
\lambda=(L_1^{1+\mu_1},L_2^{1+\mu_2},\dots,L_t^{1+\mu_t}),\qquad L_1>\cdots>L_t>0,
\]
where
\[
\alpha_j:=g_j-1\ge 0\qquad (1\le j\le t),\qquad \mu_i\ge 0\qquad (1\le i\le t),
\]
and
\[
L_i=t+1-i+\sum_{j=i}^t \alpha_j\qquad (1\le i\le t),
\]
with
\[
\nu=\sum_{j=1}^t j\,\alpha_j+\sum_{i=1}^t \mu_iL_i.
\]
Conversely, every such choice of nonnegative integers $\alpha_j,\mu_i$ determines a unique partition on the maximal-support stratum.
\end{lemma}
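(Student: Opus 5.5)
The plan is to treat this as a bijection statement between partitions of $n$ with support size $t$ and tuples $(\alpha_1,\dots,\alpha_t,\mu_1,\dots,\mu_t)\in\mathbb Z_{\ge 0}^{2t}$ satisfying the weighted constraint $\nu=\sum_j j\alpha_j+\sum_i\mu_iL_i$. The forward direction is essentially Lemma~\ref{lem:triangular-mass} rewritten in the new variables, and the converse is the same computation run backwards. Uniqueness follows because the block form of a partition is unique.

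\emph{Forward direction.} Let $\lambda\vdash n$ have support size $t$, and write it in block form $(L_1^{m_1},\dots,L_t^{m_t})$ with $L_1>\cdots>L_t>0$. This representation is unique, since the $L_i$ are the distinct part sizes in decreasing order and the $m_i$ are their multiplicities. Set $\mu_i:=m_i-1\ge 0$ and $\alpha_j:=g_j-1$, where $g_j=L_j-L_{j+1}$ and $L_{t+1}=0$. Strict decrease and $L_t>0$ give $g_j\ge 1$, so $\alpha_j\ge 0$. Telescoping gives
\[
L_i=\sum_{j=i}^t g_j=(t+1-i)+\sum_{j=i}^t\alpha_j,
\]
exactly as in the proof of Lemma~\ref{lem:triangular-mass}. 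That lemma with $r=t$ then yields
\[
n=T_t+\sum_j j\alpha_j+\sum_i\mu_iL_i,
\]
and subtracting $T_t$ gives the stated identity for $\nu$. Uniqueness of $(\alpha,\mu)$ follows from uniqueness of the block form, because $\alpha$ and $\mu$ are determined by the gaps and multiplicities.

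\emph{Converse direction.} Given nonnegative integers $\alpha_j,\mu_i$ satisfying the weighted identity, define $L_i$ by the displayed formula. Then
\[
L_i-L_{i+1}=1+\alpha_i\ge 1\quad\text{and}\quad L_t=1+\alpha_t\ge 1,
\]
so $L_1>\cdots>L_t>0$. Hence $(L_1^{1+\mu_1},\dots,L_t^{1+\mu_t})$ is a partition with exactly $t$ distinct part sizes, and its gaps and multiplicities return the original $\alpha,\mu$. Its size is $T_t+\nu=n$, again by Lemma~\ref{lem:triangular-mass} (or by the same reversed-sum computation). The two constructions are mutually inverse, which gives the bijection.

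\emph{Expected obstacle.} There is no real obstacle; the only point needing care is bookkeeping in the reversed double sum $\sum_i\sum_{j\ge i}\alpha_j=\sum_j j\alpha_j$, already handled in Lemma~\ref{lem:triangular-mass}. One should also note that the hypothesis $0\le\nu\le t$ is not used for the equivalence itself. It only guarantees, through Theorem~\ref{thm:max-support-principle}, that $t=\rho(n)$ is the maximal support, so that this stratum is the relevant one.
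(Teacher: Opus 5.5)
Your proposal is correct and follows essentially the same route as the paper: set $\alpha_j=g_j-1$ and $\mu_i=m_i-1$, telescope to get the formula for $L_i$, apply Lemma~\ref{lem:triangular-mass} for the weighted identity, and run the construction backwards for the converse, with uniqueness coming from uniqueness of the block form. Your added checks ($L_i-L_{i+1}=1+\alpha_i$, $L_t=1+\alpha_t$) are accurate, as is your remark that $0\le\nu\le t$ is not needed for the bijection; one small correction is that $t=\rho(n)$ follows directly from $T_t\le n<T_{t+1}$, while Theorem~\ref{thm:max-support-principle} is only what makes this stratum the relevant one for extremal search.
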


\begin{proof}
If $\lambda$ has support size $t$, define
\[
\alpha_j:=g_j-1=L_j-L_{j+1}-1\ge 0,\qquad \mu_i:=m_i-1\ge 0.
\]
Then the telescoping relation for the support gives
\[
L_i=(t+1-i)+\sum_{j=i}^t \alpha_j.
\]
Since $n=T_t+\nu$ and $\lambda$ has support size $t$, Lemma~\ref{lem:triangular-mass} yields
\[
\nu=\sum_{j=1}^t j\,\alpha_j+\sum_{i=1}^t \mu_iL_i.
\]
This representation is unique because the data $\alpha_j$ and $\mu_i$ are determined by the gap and multiplicity patterns of $\lambda$.

Conversely, given nonnegative integers $\alpha_j,\mu_i$ satisfying the displayed formulas, define $L_i$ as above and set
\[
\lambda=(L_1^{1+\mu_1},\dots,L_t^{1+\mu_t}).
\]
Then $L_1>\cdots>L_t>0$, so $\lambda$ has support size $t$, and Lemma~\ref{lem:triangular-mass} shows that its size is exactly $T_t+\nu=n$.
\end{proof}

Lemma~\ref{lem:max-support-encoding} is computationally important: instead of searching for extremal vertices among all partitions of $n$, one may search over weighted surplus data on the maximal-support stratum. In particular, the extremal computations may be organized using the nonnegative integer data
\[
(\alpha_1,\dots,\alpha_t;\mu_1,\dots,\mu_t)
\]
subject to the weighted budget equation
\[
\nu=\sum_{j=1}^t j\,\alpha_j+\sum_{i=1}^t \mu_iL_i.
\]
Since $\nu\le t$ in the extremal regime, this search space is modest compared with the full set of partitions of $n$.

In the computations reported below, we fix the range
\[
1\le n\le 60.
\]
All computations were performed in Python by exhaustive enumeration of partitions of $n$, with degrees evaluated directly from Proposition~\ref{prop:degree-formula}. For spectral data such as $\Spec_D(n)$, $s(n)=|\Spec_D(n)|$, and $H_n(d)=|D_d(n)|$, we enumerate all partitions of $n$ and evaluate the degree formula directly. For extremal data such as $\Delta_n$, $m_\Delta(n)=|\operatorname{MaxDeg}(n)|$, and $m_\Delta^{\mathrm{sc}}(n)=|\operatorname{MaxDeg}(n)\cap Ax_n|$, we exploit the exact formula for $\Delta_n$ from Corollary~\ref{cor:exact-max-degree} together with Lemma~\ref{lem:max-support-encoding}, and use the maximal-support encoding as a structural check on the extremal layer.

The numerical purpose of this section is therefore limited and explicit. We do not attempt a full asymptotic analysis of the degree spectrum or a complete classification of extremal partitions. Instead, we record a compact package of data that complements the theorem part of the paper: extremal multiplicities, selected extremal shapes, spectrum sizes, representative histograms, and a first comparison with the self-conjugate axis.

\subsection{Extremal multiplicity and representative extremal shapes}

The exact formula for $\Delta_n$ determines the maximal degree value for every $n$, but it does not determine the size or internal structure of the extremal set
\[
\operatorname{MaxDeg}(n)=\{\lambda\vdash n:\deg(\lambda)=\Delta_n\}.
\]
The first new numerical invariant beyond Section~\ref{sec:max-degree} is therefore the extremal multiplicity
\[
m_\Delta(n):=|\operatorname{MaxDeg}(n)|.
\]
We also record
\[
m_\Delta^{\mathrm{sc}}(n):=|\operatorname{MaxDeg}(n)\cap Ax_n|,
\]
the number of self-conjugate extremal vertices.

Table~\ref{tab:master-degree-data} records the basic degree data on the computed range. Its $\Delta_n$ column agrees throughout with the exact formula from Section~\ref{sec:max-degree} and therefore serves as a numerical consistency check rather than as an independent source of evidence. The genuinely new information lies in the columns for $m_\Delta(n)$, $m_\Delta^{\mathrm{sc}}(n)$, and $s(n)$.

On the computed range $1\le n\le 60$, the extremal multiplicity is at most $22$ and takes only five values:
\[
1,\quad 2,\quad 6,\quad 8,\quad 22.
\]
In particular, even on this modest range the extremal layer is already visibly nontrivial.
In $41$ of the first $60$ cases, the extremal set consists either of a single partition or of a single conjugate pair. The largest value on this range is
\[
m_\Delta(44)=22,\qquad m_\Delta(53)=22,
\]
and in each of these two cases the extremal layer contains two self-conjugate vertices.

\setlength{\LTleft}{0pt}
\setlength{\LTright}{0pt}
\small
\begin{longtable}{ccccccc}
\caption{Degree data for $1\le n\le 60$: maximal degree, extremal multiplicity, axial contact, and spectrum size.}
\label{tab:master-degree-data}\\
\toprule
$n$ & $\rho(n)$ & $\nu$ & $\Delta_n$ & $m_\Delta(n)$ & $m_\Delta^{\mathrm{sc}}(n)$ & $s(n)$\\
\midrule
\endfirsthead
\multicolumn{7}{l}{\tablename~\thetable\ (continued)}\\
\toprule
$n$ & $\rho(n)$ & $\nu$ & $\Delta_n$ & $m_\Delta(n)$ & $m_\Delta^{\mathrm{sc}}(n)$ & $s(n)$\\
\midrule
\endhead
\midrule
\multicolumn{7}{r}{\emph{continued on next page}}\\
\endfoot
\bottomrule
\endlastfoot
1 & 1 & 0 & 0 & 1 & 1 & 1 \\
2 & 1 & 1 & 1 & 2 & 0 & 1 \\
3 & 2 & 0 & 2 & 1 & 1 & 2 \\
4 & 2 & 1 & 3 & 2 & 0 & 3 \\
5 & 2 & 2 & 4 & 1 & 1 & 3 \\
6 & 3 & 0 & 6 & 1 & 1 & 5 \\
7 & 3 & 1 & 7 & 2 & 0 & 4 \\
8 & 3 & 2 & 8 & 1 & 1 & 7 \\
9 & 3 & 3 & 8 & 6 & 0 & 7 \\
10 & 4 & 0 & 12 & 1 & 1 & 9 \\
11 & 4 & 1 & 13 & 2 & 0 & 8 \\
12 & 4 & 2 & 14 & 1 & 1 & 11 \\
13 & 4 & 3 & 14 & 6 & 0 & 10 \\
14 & 4 & 4 & 15 & 2 & 0 & 13 \\
15 & 5 & 0 & 20 & 1 & 1 & 13 \\
16 & 5 & 1 & 21 & 2 & 0 & 15 \\
17 & 5 & 2 & 22 & 1 & 1 & 14 \\
18 & 5 & 3 & 22 & 6 & 0 & 17 \\
19 & 5 & 4 & 23 & 2 & 0 & 17 \\
20 & 5 & 5 & 23 & 8 & 0 & 19 \\
21 & 6 & 0 & 30 & 1 & 1 & 21 \\
22 & 6 & 1 & 31 & 2 & 0 & 21 \\
23 & 6 & 2 & 32 & 1 & 1 & 21 \\
24 & 6 & 3 & 32 & 6 & 0 & 24 \\
25 & 6 & 4 & 33 & 2 & 0 & 25 \\
26 & 6 & 5 & 33 & 8 & 0 & 26 \\
27 & 6 & 6 & 34 & 1 & 1 & 26 \\
28 & 7 & 0 & 42 & 1 & 1 & 29 \\
29 & 7 & 1 & 43 & 2 & 0 & 27 \\
30 & 7 & 2 & 44 & 1 & 1 & 31 \\
31 & 7 & 3 & 44 & 6 & 0 & 29 \\
32 & 7 & 4 & 45 & 2 & 0 & 33 \\
33 & 7 & 5 & 45 & 8 & 0 & 34 \\
34 & 7 & 6 & 46 & 1 & 1 & 36 \\
35 & 7 & 7 & 46 & 6 & 0 & 36 \\
36 & 8 & 0 & 56 & 1 & 1 & 37 \\
37 & 8 & 1 & 57 & 2 & 0 & 37 \\
38 & 8 & 2 & 58 & 1 & 1 & 40 \\
39 & 8 & 3 & 58 & 6 & 0 & 40 \\
40 & 8 & 4 & 59 & 2 & 0 & 43 \\
41 & 8 & 5 & 59 & 8 & 0 & 42 \\
42 & 8 & 6 & 60 & 1 & 1 & 45 \\
43 & 8 & 7 & 60 & 6 & 0 & 43 \\
44 & 8 & 8 & 60 & 22 & 2 & 47 \\
45 & 9 & 0 & 72 & 1 & 1 & 48 \\
46 & 9 & 1 & 73 & 2 & 0 & 49 \\
47 & 9 & 2 & 74 & 1 & 1 & 48 \\
48 & 9 & 3 & 74 & 6 & 0 & 51 \\
49 & 9 & 4 & 75 & 2 & 0 & 52 \\
50 & 9 & 5 & 75 & 8 & 0 & 54 \\
51 & 9 & 6 & 76 & 1 & 1 & 55 \\
52 & 9 & 7 & 76 & 6 & 0 & 57 \\
53 & 9 & 8 & 76 & 22 & 2 & 56 \\
54 & 9 & 9 & 77 & 2 & 0 & 58 \\
55 & 10 & 0 & 90 & 1 & 1 & 59 \\
56 & 10 & 1 & 91 & 2 & 0 & 61 \\
57 & 10 & 2 & 92 & 1 & 1 & 63 \\
58 & 10 & 3 & 92 & 6 & 0 & 63 \\
59 & 10 & 4 & 93 & 2 & 0 & 63 \\
60 & 10 & 5 & 93 & 8 & 0 & 65 \\
\end{longtable}
\normalsize
\clearpage

To make the top degree layer more concrete, Table~\ref{tab:extremal-shapes} lists selected representatives of extremal conjugacy orbits. The exact theorem-level cases $n=T_t$, $T_t+1$, and $T_t+2$ appear at $n=15,16,17$, while the later examples illustrate the first extremal layers with several conjugacy orbits. In particular, $n=20$, $n=35$, and $n=60$ already exhibit several distinct conjugacy orbits, whereas $n=27$ shows that unique self-conjugate extremizers persist beyond the first overlevels covered by Section~\ref{sec:max-degree}.

\begin{table}[H]
\centering
\small
\begin{tabular}{c p{0.52\textwidth} c c}
\toprule
$n$ & representative orbit member in $\operatorname{MaxDeg}(n)$ & type & orbit size\\
\midrule
15 & (5,4,3,2,1) & self-conjugate & 1 \\
16 & (6,4,3,2,1) & conjugate pair & 2 \\
17 & (6,4,3,2,1,1) & self-conjugate & 1 \\
20 & (8,5,3,2,1,1) & conjugate pair & 2 \\
20 & (7,5,4,2,1,1) & conjugate pair & 2 \\
20 & (7,5,3,2,2,1) & conjugate pair & 2 \\
20 & (7,5,3,2,1,1,1) & conjugate pair & 2 \\
27 & (8,6,4,3,2,2,1,1) & self-conjugate & 1 \\
35 & (10,8,6,4,3,2,1,1) & conjugate pair & 2 \\
35 & (10,7,5,4,3,2,2,1,1) & conjugate pair & 2 \\
35 & (9,7,6,4,3,2,2,1,1) & conjugate pair & 2 \\
60 & (13,10,8,7,6,5,4,3,2,1,1) & conjugate pair & 2 \\
60 & (12,10,9,7,6,5,4,3,2,1,1) & conjugate pair & 2 \\
60 & (12,10,8,7,6,5,4,3,2,2,1) & conjugate pair & 2 \\
60 & (12,10,8,7,6,5,4,3,2,1,1,1) & conjugate pair & 2 \\
\bottomrule
\end{tabular}
\caption{Selected representatives of extremal conjugacy orbits. For each displayed value of $n$, one representative from each conjugation orbit in $\operatorname{MaxDeg}(n)$ is listed; for non-self-conjugate orbits, we choose the lexicographically larger member.}
\label{tab:extremal-shapes}
\end{table}

\subsection{Degree spectra and selected histograms}

While the maximal degree $\Delta_n$ is now known exactly, the full degree spectrum
\[
\Spec_D(n)=\{\deg(\lambda):\lambda\vdash n\}
\]
contains substantially richer information about the global degree landscape of $G_n$. In particular, it records how many distinct local degree values occur and how the multiplicities of these values are distributed across the vertex set.

The spectrum size
\[
s(n):=|\Spec_D(n)|
\]
is already listed in Table~\ref{tab:master-degree-data}. On the range $1\le n\le 60$, it grows overall from $1$ to $65$, although not monotonically: local plateaus and small drops occur repeatedly. This suggests that the degree landscape becomes progressively more differentiated as $n$ grows, though not in a strictly uniform way.

The histograms
\[
H_n(d):=|D_d(n)|
\]
give a more detailed view. Figure~\ref{fig:degree-histograms} displays the degree histograms for $n=20$, $40$, and $60$. For $n=20$, $40$, and $60$, the computed histograms show several distinct peaks, suggesting a multimodal structure. This is consistent with the structural decomposition from Proposition~\ref{prop:degree-formula}: degree values are assembled from support contributions together with activated multiplicity and gap bonuses, so one expects several overlapping clusters rather than a single bulk profile. At the same time, the extreme upper tail remains very thin compared with the main interior peaks.

\begin{figure}[H]
\centering
\includegraphics[width=\textwidth]{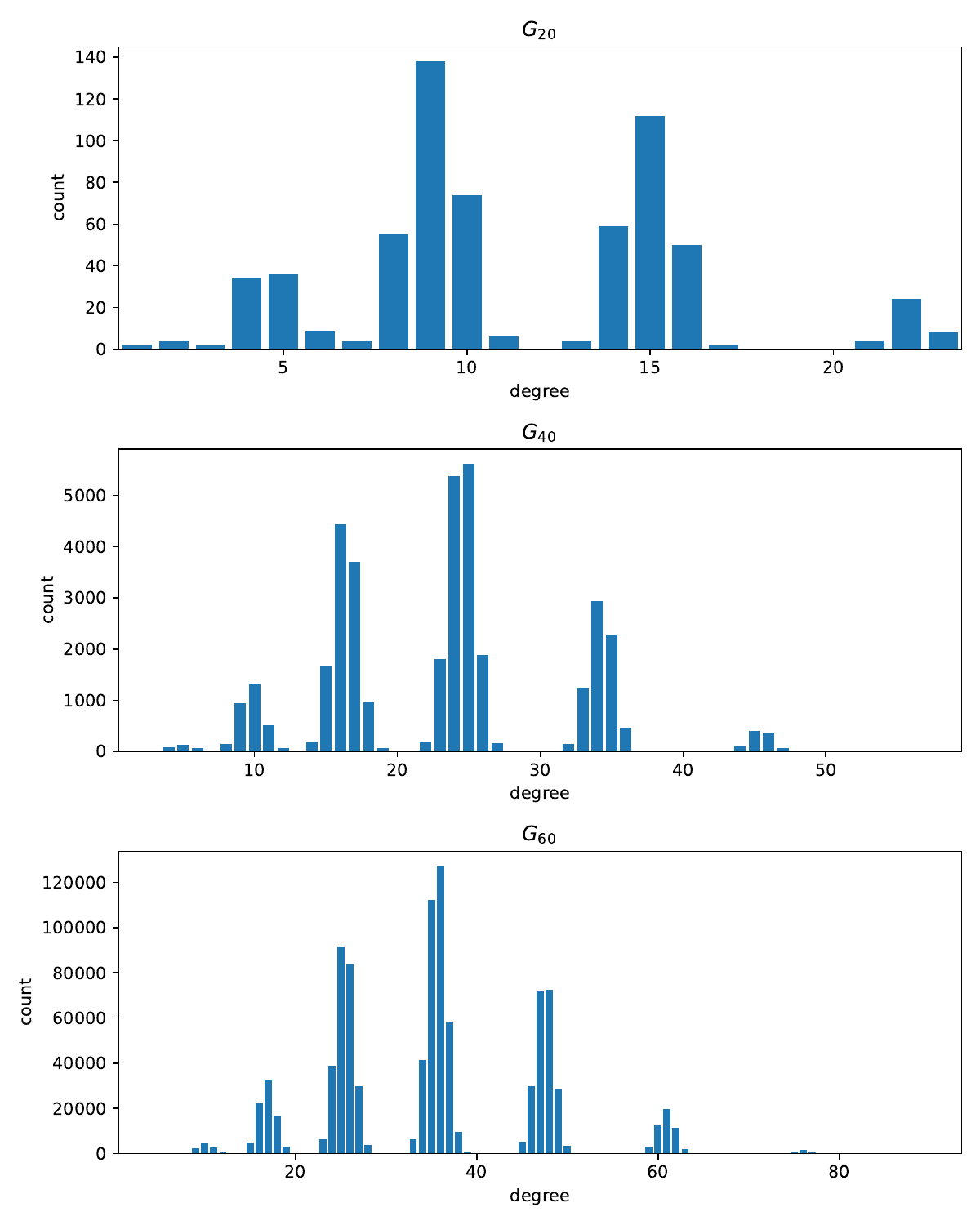}
\caption{Degree histograms for $G_{20}$, $G_{40}$, and $G_{60}$. The three computed histograms show several distinct peaks, reflecting the contribution of different support strata and bonus patterns to the degree decomposition.}
\label{fig:degree-histograms}
\end{figure}

\subsection{A first comparison with the self-conjugate axis}

The degree of a vertex is a local invariant, but a natural next question is whether the extremal layer interacts in a systematic way with the global geometry of the graph. Here we keep this comparison deliberately modest and restrict it to the most stable feature available at the current stage, namely contact with the self-conjugate axis.

The column $m_\Delta^{\mathrm{sc}}(n)$ in Table~\ref{tab:master-degree-data} gives the first axial comparison. On the range $1\le n\le 60$, the extremal layer meets the self-conjugate axis for $25$ values of $n$. This includes the exact theorem-level cases $n=T_t$ and $n=T_t+2$, but not only those. Additional self-conjugate extremizers occur, for example, at
\[
n=27,\qquad n=34,\qquad n=42,\qquad n=51,
\]
and at
\[
n=44\quad\text{and}\quad n=53,
\]
where the extremal set has size $22$ and contains two self-conjugate vertices.

We do not claim a general law on the basis of this finite range. The data show only that axial contact persists well beyond the first exact cases. This suggests that axial contact may be a persistent feature of the extremal landscape, warranting further study. More detailed statistics involving distances to the axis, the spine, the boundary framework, or the simplex stratification are natural next steps, but they are not required for this first version and are therefore left for later work.

\section{Conclusion and open problems}
\label{sec:conclusion}

We have introduced the degree landscape of the partition graph and placed it on a precise structural footing. The central result is that a local invariant --- the degree --- becomes globally tractable once the local degree formula is combined with a triangular decomposition of the total mass of a partition. This leads to a weighted budget principle for bonus activations and, ultimately, to a complete determination of the maximal degree.

The main theorem shows that if
\[
\rho(n)=\max\{r:T_r\le n\},\qquad \nu=n-T_{\rho(n)},
\]
then
\[
\Delta_n=\rho(n)\bigl(\rho(n)-1\bigr)+\beta_{\rho(n)}(\nu).
\]
Thus maximal degree is governed exactly by two ingredients: the largest possible support size and the cheapest available pattern of gap and multiplicity bonuses. In particular, every maximal-degree vertex lies on the maximal-support stratum, and the maximal-degree sequence is organized by a square--pronic threshold rule on each triangular interval.

We also obtained the first exact extremal classifications beyond the triangular levels themselves. At $n=T_t$, the unique extremal partition is the staircase partition. At $n=T_t+1$, the extremal set is a conjugate pair. At $n=T_t+2$, the unique extremal vertex is again self-conjugate. These first cases already show that the geometry of the top degree layer is subtler than the single value $\Delta_n$ alone.

At the same time, much of the global degree picture remains open. The exact value of $\Delta_n$ is now known, but the internal structure of the extremal set, the full degree spectrum, and the placement of high-degree vertices relative to the global morphology still require further analysis. The finite computation in Section~\ref{sec:computational} is intended only as a descriptive first step in that direction.

On the computed range $1\le n\le 60$, the numerical picture is already informative. The extremal multiplicity is at most $22$ and takes only the five values listed in Section~\ref{sec:computational}, the spectrum size reaches $65$, the displayed histograms show several distinct peaks, and contact with the self-conjugate axis persists beyond the first exact cases proved in Section~\ref{sec:max-degree}. These observations do not amount to new theorems, but they help identify the next structural questions and indicate where a fuller degree-theoretic morphology should begin.

We conclude with several natural problems.

\begin{problem}[Extremal multiplicity]
Determine the sequence
\[
m_\Delta(n)=|\operatorname{MaxDeg}(n)|.
\]
In particular, does $m_\Delta(T_t+\nu)$ depend primarily on the surplus $\nu$, or does it retain substantial dependence on the triangular level $t$?
\end{problem}

\begin{problem}[Classification of extremal partitions]
Classify the extremal set $\operatorname{MaxDeg}(n)$ for general $n$. Are all extremal partitions staircase-derived in a suitable sense, or do genuinely different extremal families eventually appear?
\end{problem}

\begin{problem}[Self-conjugate extremizers]
Determine when the extremal set contains self-conjugate partitions. More specifically, describe the sequence
\[
m_\Delta^{\mathrm{sc}}(n)=|\operatorname{MaxDeg}(n)\cap Ax_n|.
\]
\end{problem}

\begin{problem}[Degree spectrum size]
Determine the growth and structure of the sequence
\[
s(n)=|\Spec_D(n)|.
\]
How dense is the degree spectrum inside the interval
\[
[1,\Delta_n]?
\]
Do large internal gaps persist?
\end{problem}

\begin{problem}[Upper-tail thickness]
Study the near-extremal counts
\[
U_n(c)=|\{\lambda\vdash n:\deg(\lambda)\ge \Delta_n-c\}|.
\]
Does the upper tail of the degree spectrum exhibit stable qualitative behavior as $n$ grows?
\end{problem}

\begin{problem}[Geometric concentration]
Describe the placement of extremal and near-extremal vertices relative to the self-conjugate axis $Ax_n$, the spine $Sp_n$, the boundary framework, and the simplex layers. Do the highest degree layers concentrate near the central axial geometry of $G_n$ as $n$ grows?
\end{problem}

\begin{problem}[Interaction with simplex stratification]
Clarify the relation between degree and local simplex dimension. In particular, to what extent are large degree and large local simplex dimension correlated, and where do these two landscapes diverge?
\end{problem}

\begin{problem}[Asymptotic spectral profile]
Go beyond the maximal degree and determine asymptotic information about the full histogram
\[
H_n(d)=|D_d(n)|
\]
or suitable normalized forms of it.
\end{problem}

The degree-theoretic viewpoint introduced here should be seen as a bridge between local transfer data and the already established large-scale geometry of the partition graph. It isolates one of the simplest local invariants, shows that its extremal behavior is governed by a rigid global mechanism, and opens the way to a more systematic numerical study of the internal degree structure of $G_n$.

\section*{Acknowledgements}

The author acknowledges the use of ChatGPT (OpenAI) for discussion, structural planning, and editorial assistance during the preparation of this manuscript. All mathematical statements, proofs, computations, and final wording were checked and approved by the author, who takes full responsibility for the contents of the paper.

\end{document}